\theoremstyle{plain}
\newtheorem{theorem}{Theorem}[section]
\newtheorem{proposition}[theorem]{Proposition}
\newtheorem{lemma}[theorem]{Lemma}
\newtheorem{corollary}[theorem]{Corollary}
\theoremstyle{definition}
\newtheorem{definition}[theorem]{Definition}
\newtheorem{assumption}[theorem]{Assumption}
\theoremstyle{remark}
\newtheorem{remark}[theorem]{Remark}
\newtheorem*{openquestion}{Open question}
\title{Concentration of empirical barycenters in metric spaces}
\author[1]{Victor-Emmanuel Brunel}
\author[2]{Jordan Serres}
\affil[1]{CREST-ENSAE, Victor-Emmanuel.Brunel@ensae.fr}
\affil[2]{CREST-ENSAE, jordan.serres@ensae.fr}
\newcommand{\R}{\mathbb{R}}
\newcommand{\E}{\mathbb{E}}
\newcommand{\var}{\mathrm{Var}}
\newcommand*\diff{\mathop{}\!\mathrm{d}}
\newcommand{\Sp}{\mathbb{S}}
\newcommand{\per}{\mathrm{per}}
\newcommand{\tr}{\mathrm{tr}}
\newcommand{\CAT}{\textrm{CAT}}
\newcommand{\DS}{\displaystyle}
\begin{document}
\maketitle

\begin{abstract}

Barycenters (aka Fréchet means) were introduced in statistics in the 1940's and popularized in the fields of shape statistics and, later, in optimal transport and matrix analysis. They provide the most natural extension of linear averaging to non-Euclidean geometries, which is perhaps the most basic and widely used tool in data science. In various setups, their asymptotic properties, such as laws of large numbers and central limit theorems, have been established, but their non-asymptotic behaviour is still not well understood. In this work, we prove finite sample concentration inequalities (namely, generalizations of Hoeffding's and Bernstein's inequalities) for barycenters of i.i.d. random variables in metric spaces with non-positive curvature in Alexandrov's sense. As a byproduct, we also obtain PAC guarantees for a stochastic online algorithm that computes the barycenter of a finite collection of points in a non-positively curved space. We also discuss extensions of our results to spaces with possibly positive curvature. 

\end{abstract}

\section{Introduction}

Statistics and machine learning are more and more confronted with data that lie in non-linear spaces. For instance, in spatial statistics (e.g., directional data), computational tomography (e.g., data in quotient spaces such as in shape statistics, collected up to rigid transformations), economics (e.g., optimal transport, where data are discrete measures), etc. Moreover, data that are encoded as very high dimensional vectors may have a much smaller intrinsic dimension, for instance, if they are lying on small dimensional submanifolds of the Euclidean space: In that case, leveraging the possibly non-linear geometry of the data can be a powerful tool in order to significantly reduce the dimensionality of the problem at hand. Even though more and more algorithms are developed to work with such data \cite{LimPalfia14,ohtapalfia,zhang2016first,zhang2018towards}, there are still very little theory for uncertainty quantification, especially in non-asymptotic regimes, which are pervasive in machine learning. In this work, we prove statistical results for barycenters of data points, which are the most natural extension of linear averaging to the context of non-linear geometries. Namely, we extend the notion of sub-Gaussian random variables to the case of metric spaces and, under a non-positive curvature condition, we prove versions of both Hoeffding and Bernstein concentration inequalities. Finally, we discuss extensions of our results to the case of metric spaces with possibly positive curvature.

A complete metric space $(M,d)$ is called non-positively curved (NPC for short) if for all pairs $(x,y)\in M$, there exists $m\in M$ satisfying the following: 
\begin{equation}\label{eq:DefNPC}
    d(z,m)^2\leq \frac{1}{2}(d(z,x)^2+d(z,y)^2-\frac{1}{2}d(x,y)^2),
\end{equation}
for all $z\in M$. NPC spaces are known to induce a lot of regularity. Namely, any two points $x,y\in M$ are connected by a unique (constant speed) geodesic $\gamma_{x,y}$, i.e., a continuous mapping $\gamma=\gamma_{x,y}:[0,1]\to M$ satisfying $d(\gamma(s),\gamma(t))=|s-t|d(x,y)$, for all $s,t\in[0,1]$. Geodesics generalize line segments in Euclidean spaces, as shortest paths from one point $x$ to another point $y$. Moreover, the point $z$ in \eqref{eq:DefNPC} is unique, given by the midpoint of $x$ and $y$, i.e., $z=\gamma_{x,y}(1/2)$. Finally, the distance function $d$ is geodesically convex jointly in both variables and for all $x_0\in M$, the function $\frac{1}{2}d(x_0,\cdot)^2$ is $1$-strongly geodesically convex. A function $f$ is called geodesically convex (resp. strongly geodesically convex) if it is convex (resp. strongly convex) along any geodesic $\gamma_{x,y}, x,y\in M$, i.e., $f(\gamma_{x,y}(t))\leq (1-t)f(x)+tf(y)$, for all $x,y\in M$ and $t\in [0,1]$. We refer the reader to \cite{sturm03,bridson2013metric} for more information on NPC spaces and to \cite{bacak2014convex} for a detailed account on convexity in metric spaces. Just to cite a few examples, the family of NPC spaces include: 
\begin{itemize}
    \item Euclidean and Hilbert spaces;
    \item Hyperbolic spaces and, more generally, Cartan-Hadamard manifolds, i.e. simply connected, complete, Riemannian manifolds with everywhere non-positive sectional curvature \cite[Proposition 3.1]{sturm03};
    \item Metric trees: These are trees embedded in the Euclidean plane, in which the distance between any two points is the Euclidean length of the (unique) shortest path between them;
    \item The space $\mathcal S_p$ of symmetric positive definite matrices of size $p$ (for any fixed integer $p\geq 1$) equipped with the following distance: For any $A,B\in\mathcal S_p$, 
$$d(A,B)=\|\log(A^{-1/2}BA^{-1/2})\|_{\textsf{F}},$$
where $\|M\|_{\textsf{F}}=\sqrt{\text{Tr}(MM^\top)}, \forall M\in\R^{p\times p}$, is the Fröbenius norm. This construction is important in the study of matrix geometric means, as we will explain below, in Section~\ref{sec:GM}. For a detailed account on this space and matrix geometric means, we refer the reader to \cite{bhatia2006riemannian} and the references therein.
\end{itemize}

A natural way to extend the notion of averaging from linear to metric spaces is through the notion of barycenters. Given $x_1,\ldots,x_n\in M$ ($n\geq 1$), a barycenter of $x_1,\ldots,x_n$ is any minimizer $b\in M$ of $\sum_{i=1}^{n}d(x_i,b)^2$. One can easily check that if $(M,d)$ is a Euclidean or Hilbert space, the minimizer is unique and it is given by the average of $x_1,\ldots,x_n$; Note, however, that this is not true in general normed spaces. In NPC spaces, a barycenter of $x_1,\ldots,x_n$ is thus a solution to a strongly convex optimization problem, so it always exists and it is unique. More generally, given a probability distribution $\mu$ with two moments on $(M,d)$, one can define the barycenter of $\mu$ as the unique minimizer $b\in M$ of $\int_M d(x,b)^2\diff\mu(x)$. Here, we say that $\mu$ has two moments if and only if the function $d(x_0,\cdot)^2$ is integrable with respect to $\mu$ for some (and hence, by the triangle inequality, for all) $x_0\in M$. It is easy to check that for all pairs $x,y\in M$ and for all $t\in [0,1]$, the barycenter of $(1-t)\delta_x+t\delta_y$ (aka weighted barycenter of $x$ and $y$, with respective weights $1-t$ and $t$) is simply given by $\gamma_{x,y}(t)$. For instance, the barycenter of $x$ and $y$ is the midpoint of $x$ and $y$. The barycenter of a probability distribution can be interpreted as an extension of the notion of expectation in non-linear spaces. Fortunately, defining the barycenter of $\mu$ actually only requires that $\mu$ has one moment, and its barycenter is a minimizer $b$ of $\int_M (d(x,b)^2-d(x,b_0)^2)\diff\mu(x)$ for any fixed $b_0\in M$; One can easily check that the minimizer does not depend on the choice of $b_0$. Another interpretation of the barycenter is the point that minimizes an expected displacement cost as measured with the quadratic distance. This is why barycenters have gained a lot of importance in optimal transport theory in the past 15 years, particularly with applications to economics theory \cite{agueh2011barycenters}.

Barycenters were initially introduced in statistics by \cite{Frechet48} in the 1940's, and later by \cite{karcher1977Riemannian}, where they were better known as Fréchet means, or Karcher means. They were popularized in the field of shape statistics \cite{kendall2009shape} and optimal transport \cite{agueh2011barycenters,cuturi2014fast,le2017existence,claici2018stochastic,kroshnin2019complexity,altschuler2021wasserstein,altschuler2022wasserstein}.


An alternative construction of barycenters, which is computationally much simpler, is defined iteratively as follows.  
Given $n$ points $x_1,\ldots,x_n$, the inductive barycenter is defined as the point $s_n$, where $s_1=x_1, s_2=\gamma_{s_1,x_2}(1/2)$ and for $k=3,\ldots,n$, $s_k=\gamma_{s_{k-1},x_k}(1/k)$.
Barycenters and inductive barycenters do not coincide (unless $M$ is Euclidean or Hilbert) because of the lack of associativity of barycenters in non-linear spaces. Compared to barycenters, inductive barycenters have the advantage that they can be easily updated if the points $x_1,\ldots,x_n$ come sequentially, in an online fashion. Their computation does not imply to solve an optimization problem, and only requires to know geodesics in $M$. However, a drawback of this construction is that it depends on the order of $x_1,\ldots,x_n$. As we will see below, this drawback in very minor in a statistical context, since this dependence vanishes as $n$ grows large and $s_n$ becomes closer and closer to the barycenter of $x_1,\ldots,x_n$, in a stochastic framework. Moreover, as pointed out in \cite{ohtapalfia}, the inductive barycenter of $x_1,\ldots,x_n$ is the output of an online proximal gradient algorithm to approximate their barycenter, when the $x_i$'s are available one at a time. Indeed, for $i=1,\ldots,n$, let $f_i=d(\cdot,x_i)^2$. Then, the barycenter of $x_1,\ldots,x_n$ minimizes the convex function $f=f_1+\ldots+f_n$ and an online proximal gradient algorithm which starts at $y_1=x_1$ outputs, at each step $t=2,\ldots,n$, the point $y_t\in M$ which minimizes $f_t(y)+\frac{1}{2\lambda_t} d(y,y_{t-1})^2$, $y\in M$, where $\lambda_t$ is a varying stepsize, here, $\lambda_t=(2t)^{-1}$.

Let $\mu$ be a probability measure in $(M,d)$ with two (or, again, one would suffice) moments and let $b^*$ be its barycenter. Let $X_1,\ldots,X_n$ be i.i.d. random variables in $M$ with distribution $\mu$. Here, $n\geq 1$ is the sample size and is fixed. We are interested in the estimation of $b^*$ based on $X_1,\ldots,X_n$. Let $\hat b_n$ be their barycenter (referred to as empirical barycenter) and $S_n$ be their inductive barycenter.

Existence and uniqueness of barycenters are, in general, hard problems \cite{afsari2011riemannian, Yokota16,Yokota18}. Asymptotic theory is well understood for empirical barycenters in various setups, particularly laws of large numbers \cite{ziezold1977expected} and central limit theorems in Riemannian manifolds (a smooth structure on $M$ is a natural assumption in order to derive central limit theorems) \cite{bhattacharya2003large,bhattacharya2005large,bhattacharya2017omnibus,eltzner2019smeary,eltzner2019stability}. Only very few non-asymptotic results have been proven so far, most of which hold under fairly technical conditions. First, \cite[Theorem 4.7]{sturm03} bounded the expected value of $d(S_n,b^*)^2$ in NPC spaces. Namely, $\E[d(S_n,b^*)^2]\leq \frac{\sigma^2}{n}$ where $\sigma^2=\E[d(X_1,b^*)^2]$ can be interpreted as the variance of $X_1$. \cite[Corollary 11]{fastconv} provides the same inequality for $\hat b_n$, under the extra constraint that $(M,d)$ has curvature bounded from below. At a high level, this means that the space $(M,d)$ is not branching (i.e., a geodesic cannot split, unlike, for instance, in metric trees) and this ensures some regularity of the tangent cones of $M$. They also extended their result to spaces $(M,d)$ that are not necessarily NPC, but that satisfy a so-called hugging condition. However, except for NPC spaces, there is no explicit metric space that satisfy such a condition. Several non-asymptotic, high probability bounds are also known for empirical and inductive barycenters. \cite{fastconv} proposes a definition of sub-Gaussian random variables (eq. (3.10)), closely related to the one we give below, and proves (Theorem 12), under the hugging condition mentioned above, a nearly sub-Gaussian tail bound (with a residual term that decays exponentially fast with $n$) for the empirical barycenter of i.i.d. sub-Gaussian random variables around the population barycenter $b^*$. \cite{convrate} obtain concentration inequalities for the empirical barycenter $\hat b_n$ of i.i.d., bounded random variables with non-parametric rates, under some metric entropy conditions on $(M,d)$, some of which being similar in spirit to requiring $M$ to have finite dimension. Finally, most closely related to our work, \cite{Funano10} proves a Hoeffding-type inequality for the inductive barycenter $S_n$ of i.i.d., bounded random variables in NPC spaces, with particular focus on metric trees and finite dimensional Hadamard manifolds. In our Corollary \ref{cor:Hoeffding} below, we improve his results in two ways. First, we significantly improve the constants. Second, our bound contains two terms. First, a bias term, which does not contain the probability level $\delta$. This is the only term where the dimension of the space may appear, through the variance $\sigma^2$ of $X_1$. Hence, the second term, which is the sub-gaussian term (since it is proportional to $\sqrt{\log(1/\delta)}$), is decoupled from the variance of $X_1$.

The paper is organized as follows. In Section \ref{sec:Laplace}, we introduce tools from the concentration of measure theory that will allow us to derive non-asymptotic bounds for convergence of empirical barycenters. In Section \ref{sec:main}, we derive Hoeffding and Bernstein concentration inequalities for empirical barycenters in NPC spaces. In Section \ref{app:noniid}, we extend the previous results in the case of non identically distributed random variables (but with the same theoretical barycenter). Finally, in Section \ref{sec:cat1}, we treat the case of possibly positively curved spaces, and derive an optimal rate concentration bound depending on the radius of the space.

In what follows, for any positive integer $n$ and any metric space $(M,d)$, we denote by $d_1$ (without mention of the dependence on $n$, for simplicity) the $L^1$ product metric on $M^n$, defined as $d_1((x_1,\ldots,x_n),(y_1,\ldots,y_n))=d(x_1,y_1)+\ldots+d(x_n,y_n)$.

\section{The Laplace transform and Sub-Gaussian random variables in metric spaces}\label{sec:Laplace}

\subsection{Laplace transform}
In this section, we gather information on the Laplace transform of probability measures on metric spaces. It will allow us to study precisely the concentration phenomenon of barycenters in NPC spaces and in particular to deal with sub-Gaussian random variables in such spaces.
Let $(M,d)$ be a metric space (not necessarily NPC). Denote by $\mathcal F$ the class of all functions $f:M\to\R$ that are $1$-Lipschitz, i.e., such that $|f(x)-f(y)|\leq d(x,y)$ for all $x,y\in M$. Let $X$ be a random variable in $M$. For all $k\geq 1$, we say that $X$ has $k$ moments if $\E[d(X,x_0)^k]$ is finite, where $x_0$ is any arbitrary point in $M$ (note that this definition does not depend on the choice of $x_0$). The Laplace transform of a random variable $X$ that has at least one moment is defined as (see \cite[Section 1.6]{Ledouxconcentration})
\begin{equation}\label{eq:defLaplace}
\Lambda_X(\lambda) := \sup_{f\in \mathcal F} \E[e^{\lambda(f(X)-\E[f(X)])}], \quad \lambda\geq 0.
\end{equation}
Note that if $X$ has one moment, then so does $f(X)$, for all $f\in\mathcal F$. Let us underline the following property of the Laplace transform, whose proof can be found in \cite[Proposition 1.15]{Ledouxconcentration}.

\begin{lemma}\label{produitdelaplace}
If $X_1,\ldots,X_n$ are independent random variables on $(M,d)$ with at least one moment, then the Laplace transform of the random vector $(X_1,\ldots,X_n)$ in $(M^n,d_1)$  satisfies $$\Lambda_{(X_1,\ldots,X_n)} \leq \Lambda_{X_1}\ldots \Lambda_{X_n},$$ where we recall that $d_1$ is the $L^1$ product metric on $M^n$.
\end{lemma}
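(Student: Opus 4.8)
The plan is to reduce the multivariate Laplace transform bound to a product of one-dimensional exponential moment bounds by exploiting independence together with the supremum structure of $\Lambda$. The key observation is that $d_1$ is \emph{additive} across coordinates, so a function $F:M^n\to\R$ that is $1$-Lipschitz with respect to $d_1$ need not decompose as a sum of coordinatewise Lipschitz functions, but its relevant exponential moment can still be controlled coordinate by coordinate. Concretely, I would start by fixing an arbitrary $1$-Lipschitz function $F$ on $(M^n,d_1)$ and aim to bound $\E[e^{\lambda(F(X_1,\ldots,X_n)-\E[F])}]$ by $\prod_{i=1}^n \Lambda_{X_i}(\lambda)$. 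The mechanism is a martingale/telescoping decomposition: write the centered quantity $F-\E[F]$ as a sum of successive conditional increments $\Delta_i := \E[F\mid X_1,\ldots,X_i]-\E[F\mid X_1,\ldots,X_{i-1}]$, and then integrate out the variables one at a time, from $X_n$ down to $X_1$.

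The heart of the argument is the following claim about a single coordinate: for fixed values of the other coordinates, the map $x_i\mapsto F(x_1,\ldots,x_{i-1},x_i,x_{i+1},\ldots,x_n)$ is $1$-Lipschitz in $x_i$ with respect to $d$, precisely because $d_1$ is the $L^1$ sum of the coordinate distances, so changing only the $i$-th coordinate moves $d_1$ by exactly $d(x_i,x_i')$. This is the step where the choice of the $L^1$ product metric is essential; with an $L^2$ or $L^\infty$ product metric the coordinatewise Lipschitz constant would still be controlled, but the clean telescoping bound would not directly yield the product form. Using this coordinatewise Lipschitz property, conditionally on $X_1,\ldots,X_{n-1}$, the inner expectation over $X_n$ of $e^{\lambda(g(X_n)-\E[g(X_n)])}$, where $g$ is the $1$-Lipschitz slice function, is bounded by $\Lambda_{X_n}(\lambda)$ by the very definition \eqref{eq:defLaplace} of the Laplace transform as a supremum over $\mathcal F$.

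The main obstacle, and the step requiring care, is handling the \emph{centering} correctly as one peels off coordinates: after integrating out $X_n$, the remaining function of $X_1,\ldots,X_{n-1}$ must again be shown to be $1$-Lipschitz so that the induction can continue with $\Lambda_{X_{n-1}}(\lambda)$, and one must verify that the successive conditional expectations $x\mapsto \E[F(x,X_{i+1},\ldots,X_n)]$ preserve the $1$-Lipschitz property in the remaining free variable. This follows because taking expectation over independent coordinates is an averaging operation that cannot increase the Lipschitz constant (an expectation of $1$-Lipschitz functions is $1$-Lipschitz). Iterating this integrate-and-contract procedure $n$ times, each step contributing a factor $\Lambda_{X_i}(\lambda)$ uniformly over the conditioning, yields $\E[e^{\lambda(F-\E[F])}]\leq \prod_{i=1}^n\Lambda_{X_i}(\lambda)$; taking the supremum over all $1$-Lipschitz $F$ on $(M^n,d_1)$ gives the claimed bound $\Lambda_{(X_1,\ldots,X_n)}\leq \Lambda_{X_1}\cdots\Lambda_{X_n}$. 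Since the excerpt attributes this to \cite[Proposition 1.15]{Ledouxconcentration}, I would also note that one may simply invoke that reference, but the self-contained telescoping argument above is the natural route.
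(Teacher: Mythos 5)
Your proof is correct and is precisely the standard tensorization argument from the reference the paper itself cites for this lemma (\cite[Proposition 1.15]{Ledouxconcentration}): a martingale/telescoping decomposition into conditional increments, the coordinatewise $1$-Lipschitz property coming from the additivity of $d_1$, and the fact that averaging over independent coordinates preserves the Lipschitz constant. The paper gives no further argument beyond that citation, so your write-up simply makes the cited proof explicit.
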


Finally, we state the following property of the Laplace transform, when a Lipschitz function is applied to a random variable.

\begin{lemma} \label{lemma:LipLap}
    Let $(M^{(1)},d^{(1)})$ and $(M^{(2)},d^{(2)})$ be metric spaces and $\Phi:M^{(1)}\to M^{(2)}$ be an $L$-Lipschitz function, where $L>0$. Then, for all random variables $X$ in $M^{(1)}$ with at least one moment,
    $$\Lambda_{\Phi(X)}(\lambda)\leq \Lambda_X(\lambda L), \quad \forall \lambda\geq 0.$$
\end{lemma}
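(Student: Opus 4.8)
The plan is to reduce the statement about $\Phi(X)$ directly to the definition of the Laplace transform for $X$, by showing that every $1$-Lipschitz test function on $M^{(2)}$ pulls back, through $\Phi$, to a scaled Lipschitz function on $M^{(1)}$ over which the supremum in \eqref{eq:defLaplace} is already taken.

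**First I would** unwind the definition. Fix $\lambda\geq 0$. By \eqref{eq:defLaplace} applied in the target space,
\[
\Lambda_{\Phi(X)}(\lambda)=\sup_{g\in\mathcal F^{(2)}}\E\bigl[e^{\lambda(g(\Phi(X))-\E[g(\Phi(X))])}\bigr],
\]
where $\mathcal F^{(2)}$ denotes the $1$-Lipschitz functions on $M^{(2)}$. The key observation is that for any $g\in\mathcal F^{(2)}$, the composition $g\circ\Phi:M^{(1)}\to\R$ is $L$-Lipschitz, since $|g(\Phi(x))-g(\Phi(y))|\leq d^{(2)}(\Phi(x),\Phi(y))\leq L\,d^{(1)}(x,y)$. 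Hence $h:=L^{-1}(g\circ\Phi)$ is $1$-Lipschitz on $M^{(1)}$, i.e. $h\in\mathcal F^{(1)}$. (I should note that $g\circ\Phi$ has one moment because $\Phi$ being $L$-Lipschitz sends the one-moment random variable $X$ to a one-moment random variable $\Phi(X)$, so the expectations below are finite.)

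**Next I would** rewrite the exponent in terms of $h$. Since $g(\Phi(x))=L\,h(x)$, we get $g(\Phi(X))-\E[g(\Phi(X))]=L\bigl(h(X)-\E[h(X)]\bigr)$, so that
\[
\E\bigl[e^{\lambda(g(\Phi(X))-\E[g(\Phi(X))])}\bigr]
=\E\bigl[e^{(\lambda L)(h(X)-\E[h(X)])}\bigr]
\leq\sup_{f\in\mathcal F^{(1)}}\E\bigl[e^{(\lambda L)(f(X)-\E[f(X)])}\bigr]
=\Lambda_X(\lambda L),
\]
using $h\in\mathcal F^{(1)}$ in the inequality. This bound holds for every $g\in\mathcal F^{(2)}$, so taking the supremum over $g$ on the left-hand side yields $\Lambda_{\Phi(X)}(\lambda)\leq\Lambda_X(\lambda L)$, as desired.

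**The main obstacle is essentially bookkeeping rather than depth:** one must be careful that the map $g\mapsto L^{-1}(g\circ\Phi)$ need not be surjective onto $\mathcal F^{(1)}$, which is exactly why the result is an inequality and not an equality — the supremum defining $\Lambda_X(\lambda L)$ ranges over all $1$-Lipschitz functions on $M^{(1)}$, whereas only those of the form $L^{-1}(g\circ\Phi)$ arise from test functions on $M^{(2)}$. There is no difficulty in the integrability or in the algebra; the only point to state carefully is the Lipschitz-composition estimate and the resulting factor $L$ that converts $\lambda$ into $\lambda L$.
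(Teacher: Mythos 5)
Your proof is correct and follows exactly the argument the paper intends: the paper states Lemma \ref{lemma:LipLap} without an explicit proof, but the same composition-and-rescaling device ($\tilde g = L^{-1} g\circ\Phi$ is $1$-Lipschitz, converting $\lambda$ into $\lambda L$) appears verbatim in the paper's proof of Proposition \ref{lipsousgauss}. Nothing is missing; your remark that the pullback map need not be surjective, which is why one only gets an inequality, is a correct and worthwhile observation.
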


\subsection{Sub-Gaussian random variables}

\begin{definition}
A random variable $X$ in $(M,d)$ is called $K^2$-sub Gaussian ($K\geq 0$) if and only if its Laplace transform satisfies \begin{equation}\label{defsousgauss}
\Lambda_X(\lambda)\leq e^{\frac{\lambda^2K^2}{2}}, \quad \forall \lambda\geq 0.
\end{equation}
\end{definition}

This definition of sub-Gaussian random variables is stronger than the standard definition given in Euclidean spaces, where the Laplace transform is only defined as a supremum over linear functions, not over all Lipschitz functions. However, in the context of metric spaces, this definition seems most appropriate because of the lack of linear functions and, as will be seen in Lemma \ref{boundedsubgauss} below, allows us to treat bounded random variables. Moreover, as opposed to the definition suggested in \cite{fastconv}, it does not depend on any reference point in $M$.

Sub-Gaussian random variables are well understood and play a very important role in the Euclidean setup. In particular, they are known to enjoy good concentration properties \cite[Section 2.5]{vershynin2018high}. Note that a sub-Gaussian random variable automatically has infinitely many moments.

The following lemma extends important properties of sub-Gaussian random variables in metric spaces.

\begin{lemma}\label{lemmclassiquesousGauss}
Let $X$ be a random variable in $(M,d)$. The following propositions are equivalent.
\begin{enumerate}[(i)]
\item $X$ is $K^2$-sub-Gaussian $(K>0)$.
\item For all $f\in \mathcal F$, 
$$P(|f(X)-\E[f(X)]|\geq t) \leq 2e^{-t^2/(2K^2)}.$$
\item For all $f\in\mathcal F$, 
$$\E[e^{a(f(X)-\E[f(X)])^2}]\leq 2,$$ 
where $a=\frac{\log 2}{16eK^2}$.
\end{enumerate}
\end{lemma}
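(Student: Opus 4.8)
The plan is to prove the chain of implications $(i) \Rightarrow (ii) \Rightarrow (iii) \Rightarrow (i)$, following the classical argument for the equivalence of sub-Gaussian characterizations in $\R$ (as in \cite[Section 2.5]{vershynin2018high}), but applied to the centered real random variable $Y_f := f(X) - \E[f(X)]$ for each fixed $f \in \mathcal F$. The crucial observation that makes this transfer possible is that, by the definition \eqref{eq:defLaplace} of the Laplace transform as a supremum over $f \in \mathcal F$, the hypothesis $(i)$ gives us a \emph{uniform} bound $\E[e^{\lambda Y_f}] \leq e^{\lambda^2 K^2/2}$ for every $f\in\mathcal F$ and every $\lambda \geq 0$. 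So each $Y_f$ behaves like a one-sided sub-Gaussian real random variable with parameter $K^2$, and the whole lemma reduces to reproving standard scalar facts, with care taken that the bound in \eqref{eq:defLaplace} only controls $\lambda \geq 0$ (not $\lambda < 0$).

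For $(i) \Rightarrow (ii)$, I would fix $f \in \mathcal F$ and bound the upper tail $P(Y_f \geq t)$ via the Chernoff method: for $\lambda \geq 0$, $P(Y_f \geq t) \leq e^{-\lambda t}\E[e^{\lambda Y_f}] \leq e^{-\lambda t + \lambda^2 K^2/2}$, then optimize over $\lambda \geq 0$ by taking $\lambda = t/K^2$ to get $e^{-t^2/(2K^2)}$. For the lower tail $P(Y_f \leq -t) = P(-Y_f \geq t)$, the point is that $-f \in \mathcal F$ as well (it is still $1$-Lipschitz), and $-Y_f = (-f)(X) - \E[(-f)(X)]$, so the same bound applies to $-f$. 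Summing the two one-sided bounds yields the factor $2$ in $(ii)$. This symmetry trick is what lets us avoid needing the definition of $\Lambda_X$ for negative $\lambda$.

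For $(ii) \Rightarrow (iii)$, I would use the standard layer-cake / integration-by-parts identity $\E[e^{a Y_f^2}] = 1 + \int_0^\infty a e^{as} P(Y_f^2 > s)\,\diff s$, substitute the bound $P(Y_f^2 > s) = P(|Y_f| > \sqrt{s}) \leq 2 e^{-s/(2K^2)}$ from $(ii)$, and evaluate the resulting Gaussian-type integral, which converges precisely when $a < 1/(2K^2)$. Choosing $a = \frac{\log 2}{16 e K^2}$ (comfortably below $1/(2K^2)$) and checking the arithmetic gives the bound $\E[e^{aY_f^2}] \leq 2$; the specific constant is chosen to make this numerical inequality hold with a clean value, so I would just verify the elementary calculus estimate rather than optimize it. For the final implication $(iii) \Rightarrow (i)$, I would again fix $f$ and expand $e^{\lambda Y_f}$ to relate the moment generating function to the exponential-square moment. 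The cleanest route is to use $\lambda y \leq \frac{1}{4c}\lambda^2 + c y^2$ (Young's inequality) or, more standardly, to bound $\E[e^{\lambda Y_f}]$ by splitting $e^{\lambda Y_f} \leq e^{\lambda^2/(4a) + a Y_f^2}$ and applying $(iii)$, which gives $\Lambda_X(\lambda) \leq 2 e^{\lambda^2/(4a)}$; one then absorbs the factor $2$ into the exponent using a standard argument to recover the clean form $e^{\lambda^2 K'^2/2}$ for a suitable $K'$ comparable to $K$. The main obstacle I anticipate is purely bookkeeping of constants: the three characterizations are equivalent only up to absolute constant factors in the sub-Gaussian parameter, so I must track how $K$ degrades through the cycle and confirm that the stated constant $a = \frac{\log 2}{16 e K^2}$ is consistent, rather than any genuinely deep difficulty. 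The conceptual content is entirely contained in the reduction to the one-dimensional random variables $Y_f$, uniformly over $f \in \mathcal F$.
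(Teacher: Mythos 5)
Your proposal is correct and follows exactly the route the paper implicitly relies on (the lemma is stated without proof, deferring to the standard scalar equivalences of \cite[Section 2.5]{vershynin2018high} applied to $Y_f=f(X)-\E[f(X)]$ uniformly over $f\in\mathcal F$, with the observation that $-f\in\mathcal F$ supplying the lower tail despite $\Lambda_X$ being defined only for $\lambda\geq 0$). The one point worth making explicit is that your implication $(iii)\Rightarrow(i)$ returns a sub-Gaussian parameter $K'=CK$ for an absolute constant $C>1$ rather than $K$ itself; this is unavoidable (the three statements are equivalent only up to universal constants in the parameter, as in Vershynin's Proposition 2.5.2), so it reflects a looseness in the lemma's phrasing rather than a gap in your argument, and the directions actually used in the paper, $(i)\Rightarrow(ii)$ and $(ii)\Rightarrow(iii)$, do hold with the stated constants.
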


In particular, Lemma \ref{lemmclassiquesousGauss} indicates that if $X$ is $K^2$-sub-Gaussian, for some $K>0$, then for any $\delta\in (0,1)$ and for any fixed point $x_0\in M$, it holds with probability at least $1-\delta$ that
$$d(X,x_0)\leq \E[d(X,x_0)]+K\sqrt{2\log(2/\delta)}.$$

The next two propositions show that the sub-Gaussian property is preserved by products and by Lipschitz transformations.


\begin{proposition}\label{produitsousgaussien}
Let $X_1,\ldots,X_n$ be independent random variables in $M$ such that each $X_i$ is $K_i^2$-sub-Gaussian for some $K_i>0$. Then, the $n$-uple $(X_1,\ldots,X_n)$ is $(K_1^2+\ldots+K_n^2)$-sub-Gaussian on the product metric space $(M^n, d_1)$.
\end{proposition}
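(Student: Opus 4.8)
The plan is to reduce the statement about the product vector $(X_1,\ldots,X_n)$ to the already-established properties of the individual Laplace transforms, using Lemma \ref{produitdelaplace}. By definition, the product vector is $(K_1^2+\ldots+K_n^2)$-sub-Gaussian precisely when $\Lambda_{(X_1,\ldots,X_n)}(\lambda)\leq \exp\!\bigl(\tfrac{\lambda^2(K_1^2+\ldots+K_n^2)}{2}\bigr)$ for all $\lambda\geq 0$, where the Laplace transform is taken on $(M^n,d_1)$. So the whole proof is essentially a chain of two inequalities.

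First I would apply Lemma \ref{produitdelaplace}, which gives the factorization
\[
\Lambda_{(X_1,\ldots,X_n)}(\lambda)\leq \Lambda_{X_1}(\lambda)\cdots\Lambda_{X_n}(\lambda), \quad \forall\lambda\geq 0.
\]
Then I would invoke the hypothesis that each $X_i$ is $K_i^2$-sub-Gaussian, i.e. $\Lambda_{X_i}(\lambda)\leq \exp\!\bigl(\tfrac{\lambda^2 K_i^2}{2}\bigr)$. Substituting these bounds into the product and combining the exponentials yields
\[
\Lambda_{(X_1,\ldots,X_n)}(\lambda)\leq \prod_{i=1}^n \exp\!\Bigl(\frac{\lambda^2 K_i^2}{2}\Bigr)=\exp\!\Bigl(\frac{\lambda^2(K_1^2+\ldots+K_n^2)}{2}\Bigr),
\]
which is exactly the sub-Gaussian bound with constant $K_1^2+\ldots+K_n^2$. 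This completes the argument.

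The reason the proof is this short is that Lemma \ref{produitdelaplace} has already done the genuinely nontrivial work, namely showing that the Laplace transform of the product (defined as a supremum over all $1$-Lipschitz functions on $(M^n,d_1)$) is controlled by the product of the individual Laplace transforms. The subtlety there is that a $1$-Lipschitz function on $(M^n,d_1)$ need not decompose as a sum of functions of the individual coordinates, yet the tensorization still goes through for the $L^1$ product metric; this is precisely the content of \cite[Proposition 1.15]{Ledouxconcentration}. Since I am allowed to assume that lemma, there is essentially no obstacle left: the only thing to check is that each factor $\Lambda_{X_i}(\lambda)$ appearing in the product is correctly bounded using the definition of sub-Gaussianity, and that the exponents add up correctly. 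I would simply make sure to state that the Laplace transform on the left is the one associated with the metric $d_1$, so that Lemma \ref{produitdelaplace} applies verbatim.
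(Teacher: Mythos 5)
Your proof is correct and follows exactly the route the paper takes: the paper's proof of Proposition \ref{produitsousgaussien} is the one-line remark that it is a direct consequence of Lemma \ref{produitdelaplace}, and your write-up simply makes explicit the factorization of $\Lambda_{(X_1,\ldots,X_n)}$ followed by the substitution of the individual sub-Gaussian bounds. Nothing is missing; you have just spelled out the details the authors left implicit.
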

\begin{proof}
This is a direct consequence of Lemma \ref{produitdelaplace}.
\end{proof}

\begin{proposition}\label{lipsousgauss}
Let $(M^{(1)},d^{(1)})$ and $(M^{(2)},d^{(2)})$ be metric spaces and let $X$ be a random variable in $M_1$. Let $K,L>0$. If $X$ is $K^2$-sub-Gaussian and $\Phi:M^{(1)}\to M^{(2)}$ is $L$-Lipschitz, then $\Phi(X)$ is $(L^2K^2)$-sub Gaussian.
\end{proposition}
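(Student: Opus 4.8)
The plan is to reduce this statement directly to the behaviour of the Laplace transform under Lipschitz maps, which has already been recorded in Lemma \ref{lemma:LipLap}. Before invoking it, I would first check that $\Phi(X)$ has at least one moment, so that its Laplace transform $\Lambda_{\Phi(X)}$ is well defined in the sense of \eqref{eq:defLaplace}. Since $X$ is $K^2$-sub-Gaussian, it has infinitely many moments, and the $L$-Lipschitz bound $d^{(2)}(\Phi(X),\Phi(x_0))\leq L\,d^{(1)}(X,x_0)$ (for any fixed $x_0\in M^{(1)}$) transfers integrability from $d^{(1)}(X,x_0)$ to $d^{(2)}(\Phi(X),\Phi(x_0))$, so $\Phi(X)$ has at least one moment as well.

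Next I would apply Lemma \ref{lemma:LipLap} to obtain, for every $\lambda\geq 0$, the inequality $\Lambda_{\Phi(X)}(\lambda)\leq \Lambda_X(\lambda L)$. Then I would simply plug in the sub-Gaussian bound \eqref{defsousgauss} for $X$, evaluated at the argument $\lambda L$, namely $\Lambda_X(\lambda L)\leq e^{(\lambda L)^2K^2/2}$. Combining the two yields
$$\Lambda_{\Phi(X)}(\lambda)\leq e^{\lambda^2 L^2 K^2/2},\quad \forall \lambda\geq 0,$$
which is exactly the defining condition \eqref{defsousgauss} for $\Phi(X)$ to be $(L^2K^2)$-sub-Gaussian. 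This completes the argument.

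There is no genuine obstacle here: the statement is an immediate corollary of Lemma \ref{lemma:LipLap} together with the definition of sub-Gaussianity, and the only point requiring a word of care is the preliminary verification that $\Phi(X)$ has a moment so that the Laplace transform is defined. The whole proof fits in two lines once Lemma \ref{lemma:LipLap} is in hand.
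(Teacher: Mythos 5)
Your proof is correct and follows essentially the same route as the paper: the paper's own argument simply inlines the proof of Lemma \ref{lemma:LipLap} (composing a $1$-Lipschitz $g$ with $\Phi$ and rescaling by $L^{-1}$) before applying the sub-Gaussian bound at $\lambda L$, which is exactly your two-step reduction. Your additional remark on checking that $\Phi(X)$ has a moment is a harmless (and reasonable) extra precaution that the paper leaves implicit.
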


\begin{proof}
Let $g:M^{(2)}\rightarrow\R$ be a $1$-Lipschitz function such that $g(\Phi(X))$ is centered, and let $\tilde g:=L^{-1}g\circ\Phi$. Then, $\tilde g:M^{(1)}\rightarrow\R$ is $1$-Lipschitz and $\tilde g (X)  $ is centered, so we have
\begin{align*}
\E e^{\lambda g(\Phi(X))-\E[g(\Phi(X))]} & = \E e^{\lambda L(\tilde g(X)-\E[\tilde g(X)])} \\
& \leq \Lambda_\mu(\lambda L)\\
& \leq e^{\lambda^2L^2K^2/2}.
\end{align*}
We conclude by taking the supremum over all such functions $g$.
\end{proof}

\vspace{-2mm}

Let us conclude this section by stating an important lemma from \cite{Ledouxconcentration}, which, similarly to Hoeffding's lemma for real-valued random variables, indicates that bounded random variables are always sub-Gaussian.

\begin{lemma}\cite[Proposition 1.16]{Ledouxconcentration} \label{boundedsubgauss}
Let $X$ be a bounded random variable in the metric space $(M,d)$, i.e. $d(x_0,X)\leq C$ a.s. for some $x_0\in M$ and $C>0$. Then, $X$ is $4C^2$-sub-Gaussian. 
\end{lemma}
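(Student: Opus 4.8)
The plan is to prove that a bounded random variable $X$ with $d(x_0,X)\leq C$ almost surely is $4C^2$-sub-Gaussian, i.e., that $\Lambda_X(\lambda)\leq e^{2\lambda^2C^2}$ for all $\lambda\geq 0$. Since the Laplace transform is defined as a supremum over all $1$-Lipschitz functions $f$, the goal reduces to bounding $\E\bigl[e^{\lambda(f(X)-\E[f(X)])}\bigr]$ uniformly over $f\in\mathcal{F}$. The key observation is that for any fixed $1$-Lipschitz $f$, the real-valued random variable $Y:=f(X)-\E[f(X)]$ is centered and bounded: indeed, by the Lipschitz property and the triangle inequality, $|f(X)-f(x_0)|\leq d(X,x_0)\leq C$ almost surely, so $f(X)$ takes values in an interval of width at most $2C$, and the same holds for the centered version $Y$ after translating by the constant $\E[f(X)]$.

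First I would reduce the problem to the scalar case. For each $f\in\mathcal{F}$, set $Y=f(X)-\E[f(X)]$, which satisfies $\E[Y]=0$ and lies in an interval $[a,a+2C]$ for some $a$ (the precise endpoints depend on $f$, but the width is at most $2C$). The classical Hoeffding lemma for bounded real random variables then applies directly.

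Next I would invoke Hoeffding's lemma: if a centered real random variable $Y$ takes values in an interval of length $\ell$, then $\E[e^{\lambda Y}]\leq e^{\lambda^2\ell^2/8}$ for all $\lambda$. Here $\ell\leq 2C$, so $\ell^2/8\leq (2C)^2/8=C^2/2$, giving $\E[e^{\lambda Y}]\leq e^{\lambda^2 C^2/2}$. This bound is uniform in $f$ since it depends only on the width bound $2C$ and not on the particular function. Taking the supremum over $f\in\mathcal{F}$ yields $\Lambda_X(\lambda)\leq e^{\lambda^2 C^2/2}$, which already shows $X$ is $C^2$-sub-Gaussian. The stated constant $4C^2$ is therefore a (generous) weakening, so the bound we derive is in fact stronger; one may simply note that $e^{\lambda^2 C^2/2}\leq e^{\lambda^2(4C^2)/2}$ to match the claimed form, or keep the sharper constant.

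The conceptual work here is minimal since this is a cited result from \cite{Ledouxconcentration}; the only point requiring care is the uniformity of the Hoeffding bound over the entire class $\mathcal{F}$, which I expect to be the crux. This uniformity is guaranteed precisely because the boundedness assumption $d(x_0,X)\leq C$ controls the oscillation of \emph{every} $1$-Lipschitz function simultaneously: no matter which $f$ is chosen, its image of the (almost sure) support of $X$ is contained in an interval of diameter at most $2C$. Thus the boundedness of $X$ in the metric $d$ transfers, through the Lipschitz property, to a uniform boundedness of $f(X)$ over all admissible $f$, and the scalar Hoeffding lemma closes the argument without any further appeal to the geometry of $M$.
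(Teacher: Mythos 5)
Your proof is correct. The paper does not prove this lemma at all --- it is imported verbatim as \cite[Proposition 1.16]{Ledouxconcentration} --- so your argument is a self-contained reconstruction rather than a deviation from the paper's route. The reduction you use is the right one: for any $f\in\mathcal F$, the Lipschitz property gives $|f(X)-f(x_0)|\leq d(X,x_0)\leq C$ a.s., so $f(X)$ (and its centered version) lives in an interval of width at most $2C$, \emph{uniformly} over $f$, and the scalar Hoeffding lemma then bounds $\E\bigl[e^{\lambda(f(X)-\E[f(X)])}\bigr]$ by $e^{\lambda^2C^2/2}$ independently of $f$; taking the supremum over $\mathcal F$ closes the argument. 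Note that this actually establishes that $X$ is $C^2$-sub-Gaussian, which is stronger than the stated $4C^2$; the weaker constant in Ledoux's proposition comes from a cruder treatment of the bounded scalar variable (e.g.\ symmetrization, giving $e^{\lambda^2 D^2/2}$ with $D=2C$ the diameter of the support, rather than Hoeffding's $e^{\lambda^2 D^2/8}$). Since the lemma as stated only claims $4C^2$, your sharper bound trivially implies it, and nothing downstream in the paper is affected.
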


\section{Concentration of empirical barycenters in NPC spaces}\label{sec:main}

\subsection{Lipschitz property of barycenters} \label{subsec:Lip}

The last ingredient in order to prove concentration of empirical barycenters and inductive barycenters in NPC spaces is their Lipschitz property, which we state in the next proposition. Denote by $B_n:M^n\to M$ the function that maps any $n$-uple to its (uniquely defined) barycenter and by $\tilde B_n$ the function that maps any $n$-uple to its inductive barycenter.

\begin{proposition}\label{baryarelip}
Assume that $(M,d)$ is an NPC space. Then, both functions $B_n$ and $\tilde B_n$ are $(1/n)$-Lipschitz, $M^n$ being equipped with the $L^1$ product metric $d_1$. 
\end{proposition}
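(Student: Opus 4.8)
The plan is to treat the two maps separately, since the inductive barycenter $\tilde B_n$ is governed purely by the geometry of geodesics while the barycenter $B_n$ requires a variational (strong convexity) argument. For $\tilde B_n$, I would argue by induction on $n$, using the joint geodesic convexity of $d$ recalled after \eqref{eq:DefNPC}, which yields the interpolation bound $d(\gamma_{x,y}(t),\gamma_{x',y'}(t))\le (1-t)\,d(x,x')+t\,d(y,y')$ for any two geodesics and any $t\in[0,1]$. Given tuples $(x_1,\dots,x_n)$ and $(x_1',\dots,x_n')$ with inductive barycenters $s_k$ and $s_k'$, the base case $s_1=x_1$, $s_1'=x_1'$ is immediate, and the inductive step applies the interpolation bound with $t=1/k$ to $s_k=\gamma_{s_{k-1},x_k}(1/k)$ and $s_k'=\gamma_{s_{k-1}',x_k'}(1/k)$:
\[
d(s_k,s_k')\le \Big(1-\tfrac1k\Big)\,d(s_{k-1},s_{k-1}')+\tfrac1k\,d(x_k,x_k')\le \tfrac1k\sum_{i=1}^{k}d(x_i,x_i'),
\]
where the last step uses the induction hypothesis $d(s_{k-1},s_{k-1}')\le \frac{1}{k-1}\sum_{i=1}^{k-1}d(x_i,x_i')$. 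Taking $k=n$ gives the $(1/n)$-Lipschitz bound for $\tilde B_n$.

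For $B_n$, write $b=B_n(x_1,\dots,x_n)$ and $b'=B_n(y_1,\dots,y_n)$. The key input is the variance inequality at the minimizer: since $b$ minimizes the $2n$-strongly geodesically convex function $z\mapsto \sum_i d(x_i,z)^2$ (each $\frac12 d(x_i,\cdot)^2$ being $1$-strongly geodesically convex), one has $\frac1n\sum_i d(x_i,z)^2\ge \frac1n\sum_i d(x_i,b)^2+d(z,b)^2$ for every $z\in M$. I would apply this with $z=b'$, and apply the analogous inequality for $b'$ with $z=b$, then add the two. After rearranging, the squared-barycenter-distance terms combine to give
\[
2\,d(b,b')^2\le \frac1n\sum_{i=1}^{n}\big[\phi_i(b')-\phi_i(b)\big],\qquad \phi_i(z):=d(z,x_i)^2-d(z,y_i)^2 .
\]

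It then remains to show that each $\phi_i$ is $2\,d(x_i,y_i)$-Lipschitz, for if so the right-hand side is at most $\frac2n\sum_i d(x_i,y_i)\,d(b,b')=\frac2n\,d_1(x,y)\,d(b,b')$, and dividing by $2\,d(b,b')$ (the case $b=b'$ being trivial) yields $d(b,b')\le \frac1n d_1(x,y)$. The required Lipschitz estimate is exactly the quadrilateral comparison inequality
\[
d(z,p)^2+d(w,q)^2\le d(z,q)^2+d(w,p)^2+2\,d(z,w)\,d(p,q),
\]
valid in NPC spaces (the Berg--Nikolaev characterization of $\CAT(0)$ spaces, which specializes to the Cauchy--Schwarz identity in the Euclidean case); applied with $(p,q)=(x_i,y_i)$ and once more with $p$ and $q$ swapped, it gives $|\phi_i(z)-\phi_i(w)|\le 2\,d(x_i,y_i)\,d(z,w)$. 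I expect this last comparison inequality to be the main obstacle: unlike the inductive case, it cannot be read off directly from the midpoint definition \eqref{eq:DefNPC} or from convexity of $d$ alone, since the squared-distance terms do not cancel as they would in a Hilbert space. One either invokes the quadrilateral inequality as a known $\CAT(0)$ fact or proves the equivalent statement that $z\mapsto d(z,p)^2-d(z,q)^2$ has Lipschitz constant $2\,d(p,q)$ via a first-variation argument along the geodesic joining $z$ and $w$.
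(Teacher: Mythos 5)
Your proof of the $\tilde B_n$ part is the same induction via joint geodesic convexity of $d$ that the paper uses (it cites Funano for exactly this argument), but your treatment of $B_n$ is a genuinely different, and correct, route. The paper offers two arguments for $B_n$: one passes to the limit of inductive barycenters of periodized sequences using Lim--P\'alfia's convergence theorem together with the already-established Lipschitz bound for $\tilde B_{qn}$; the other invokes Sturm's Jensen inequality, i.e.\ the contraction property $d(B(\mu),B(\nu))\le W_1(\mu,\nu)$ of the barycenter map on $(\mathcal P^1(M),W_1)$, applied to the two empirical measures with the obvious coupling. You instead argue variationally: the variance inequality $\frac1n\sum_i d(x_i,z)^2\ge \frac1n\sum_i d(x_i,b)^2+d(z,b)^2$ at each minimizer (a correct consequence of the $2$-strong geodesic convexity of $d(x_i,\cdot)^2$), applied symmetrically and summed, reduces the claim to the $2\,d(x_i,y_i)$-Lipschitz property of $z\mapsto d(z,x_i)^2-d(z,y_i)^2$, which you correctly identify with the Berg--Nikolaev quasilinearization (Cauchy--Schwarz) inequality for $\CAT(0)$ spaces; the bookkeeping then closes as you describe. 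What each approach buys: yours is self-contained modulo one quotable four-point inequality, avoids both the Lim--P\'alfia limit theorem and Sturm's $W_1$-contraction result (heavier imports), and extends verbatim to weighted barycenters and to barycenters of general probability measures. The trade-off is that your argument is genuinely tied to non-positive curvature --- both the strong convexity of $d^2$ and the quadrilateral inequality are characterizations of $\CAT(0)$ --- whereas the paper's remark following the proposition observes that the $(1/n)$-Lipschitz property already holds in the strictly weaker class of Busemann spaces, a generality your variational route does not reach.
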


\begin{proof}
The proof for the inductive barycenter function $\tilde B_n$ follows from a simple induction and it can be found in \cite[Lemma 3.1]{Funano10}.

The $1$-Lipschitz property of $B_n$ follows from two different arguments, which we both give here, because they are both instructive. Let $\mathcal P^1(M)$ be the set of all probability measures on $(M,d)$ with finite first moment. For $\mu\in\mathcal P^1(M)$, let $B(\mu)$ be its barycenter, i.e., the (unique) minimizer $b\in M$ of $\E[d(X,b)^2-d(X,b_0)^2]$, where $X\sim\mu$ and $x_0\in M$ is arbitrarily fixed. 

The first argument follows from \cite[Theorem 3.4]{LimPalfia14}, which provides a deterministic connection between barycenters and their inductive versions. Let $(x_1,\ldots,x_n)$ and $(y_1,\ldots,y_n)$ be two $n$-uples in $M$. We extend these $n$-uples into periodic infinite sequences by setting, for all positive integers $k$, $x_k=x_{r_k}$ and $y_k=y_{r_k}$, where $r_k$ is the unique integer between $1$ and $n$ such that $k-r_k$ is a multiple of $n$. Then, \cite[Theorem 3.4]{LimPalfia14} indicates that $\tilde B_k(x_1,\ldots,x_k)$ and $\tilde B_k(y_1,\ldots,y_k)$ converge to $B_n(x_1,\ldots,x_n)$ and $B_n(y_1,\ldots,y_n)$ respectively, as $k\to\infty$. Moreover, thanks to the $(1/(qn))$-Lipschitz feature of $\tilde B_{qn}$ proved above, one has, for all positive integers~$q$, 
\begin{align*}
& d(\tilde B_{qn}(x_1,\ldots,x_{qn}),\tilde B_{qn}(y_1,\ldots,y_{qn})) \\
& \quad \quad \leq \frac{1}{qn}\sum_{k=1}^{qn} d(x_k,y_k)=\frac{1}{n}\sum_{k=1}^{n} d(x_k,y_k).
\end{align*}
Taking the limit as $q\to\infty$ yields the $(1/n)$-Lipschitz property of $B_n$. 


The second argument uses Jensen's inequality, which implies that the barycenter functional $B$ is contractive over $\mathcal{P}^1(M)$, equipped with the Wasserstein distance $W_1$ \cite[Theorem 6.3]{sturm03}. More precisely, for all probability measures $\mu,\nu\in\mathcal P^1(M)$, it holds that $d(B(\mu),B(\nu))\leq W_1(\mu,\nu)$, where $W_1(\mu,\nu)=\inf_{X\sim\mu,Y\sim\nu}\E[d(X,Y)]$. Now, fix two $n$-uples $(x_1,\ldots,x_n)$ and $(y_1,\ldots,y_n)$ in $M^n$ and set $\mu=n^{-1}\sum_{i=1}^n \delta_{x_i}$ and $\nu=n^{-1}\sum_{i=1}^n \delta_{y_i}$. It is clear that $B(\mu)=B_n(x_1,\ldots,x_n)$ and $B(\nu)=B_n(y_1,\ldots,y_n)$. Moreover, $W_1(\mu,\nu)\leq \frac{1}{n}(d(x_1,y_1)+\ldots+d(x_n,y_n))$, which can be seen by taking the coupling $(X,Y)$ of $\mu$ and $\nu$ such that $P(X=a_i, Y=b_i)=\frac{1}{n}, i=1,\ldots,n$. 
\end{proof}

\begin{remark}
    The above proposition actually only requires that $(M,d)$ is a Buseman space, i.e., a metric space $(M,d)$ such that the metric $d$ is geodesically convex jointly in both variables. This property is weaker than the NPC property, since any NPC space is a Buseman space, but, for instance, strictly convex normed spaces are Buseman but not NPC. 
\end{remark}

\subsection{A concentration inequality for barycenters of sub-Gaussian random variables}

We are now in position to state our first main result, which implies concentration of the empirical barycenter and the inductive barycenter of i.i.d. sub-Gaussian random variables in an NPC metric space. This result is a direct consequence of Propositions \ref{produitsousgaussien} and \ref{lipsousgauss} above.

\begin{theorem}\label{thm1}
Let $(M,d)$ be an NPC space and $X_1,...,X_n$ be independent random variables in $(M,d)$ such that for all $i=1,\ldots,n$, $X_i$ is $K_i^2$-sub-Gaussian, for some $K_i>0$. Then both the empirical and inductive barycenters of $X_1,...,X_n$ are $\frac{K_1^2+\ldots+K_n^2}{n^2}$-sub Gaussian.
\end{theorem}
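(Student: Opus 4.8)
The theorem follows almost immediately from the two preservation properties established just before it, so the plan is to assemble them in the right order rather than to carry out any new analytic work.

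\medskip

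The plan is to proceed in two steps. First I would package the individual sub-Gaussian hypotheses into a statement about the joint random vector. By Proposition~\ref{produitsousgaussien}, since $X_1,\ldots,X_n$ are independent and each $X_i$ is $K_i^2$-sub-Gaussian, the tuple $(X_1,\ldots,X_n)$ is $(K_1^2+\ldots+K_n^2)$-sub-Gaussian in the product space $(M^n,d_1)$. This is exactly the point where the choice of the $L^1$ product metric $d_1$ pays off, because it is the metric under which the Laplace transform tensorizes (Lemma~\ref{produitdelaplace}) and hence the one under which sub-Gaussian constants add.

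\medskip

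Second I would apply the Lipschitz-transfer principle. By Proposition~\ref{baryarelip}, both $B_n$ and $\tilde B_n$ are $(1/n)$-Lipschitz as maps from $(M^n,d_1)$ to $(M,d)$. Writing $\hat b_n = B_n(X_1,\ldots,X_n)$ and $S_n = \tilde B_n(X_1,\ldots,X_n)$, I invoke Proposition~\ref{lipsousgauss} with $L=1/n$ and $K^2 = K_1^2+\ldots+K_n^2$: an $L$-Lipschitz image of a $K^2$-sub-Gaussian variable is $(L^2K^2)$-sub-Gaussian. This yields that each barycenter is $\frac{K_1^2+\ldots+K_n^2}{n^2}$-sub-Gaussian, which is precisely the claimed constant, and completes the proof.

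\medskip

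There is essentially no obstacle internal to this argument, as the paper itself notes that the theorem is a direct consequence of Propositions~\ref{produitsousgaussien} and \ref{lipsousgauss}. The only point requiring care is bookkeeping of the metric: one must be sure to use $d_1$ (not, say, the $\ell^2$ product metric) throughout, since both the tensorization of sub-Gaussianity and the $(1/n)$-Lipschitz bound on the barycenter maps are stated with respect to $d_1$. All of the genuine difficulty has already been absorbed into the earlier results---the tensorization of the Laplace transform, the behaviour of sub-Gaussianity under Lipschitz maps, and, most substantially, the proof that $B_n$ is $(1/n)$-Lipschitz in Proposition~\ref{baryarelip}---so the present statement reduces to chaining these two facts.
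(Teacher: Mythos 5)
Your argument is correct and is precisely the paper's own proof: tensorize sub-Gaussianity via Proposition~\ref{produitsousgaussien}, then push it through the $(1/n)$-Lipschitz barycenter maps of Proposition~\ref{baryarelip} using Proposition~\ref{lipsousgauss}. Nothing is missing, and your remark about consistently using the $L^1$ product metric $d_1$ is the right point of care.
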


As a consequence, thanks to Lemma \ref{lemmclassiquesousGauss}, for all $1$-Lipschitz functions $f:M\to\R$ and for all $\delta\in (0,1)$, it holds with probability at least $1-\delta$ that
\begin{equation} \label{eq:bary1}
    f(T_n)\leq \E[f(T_n)]+\bar K_n\sqrt{\frac{\log(1/\delta)}{n}},
\end{equation}
where $T_n$ is either the empirical or the inductive barycenter of $X_1,\ldots,X_n$ and $\bar K_n=\sqrt{K_1^2+\ldots+K_n^2}$. 

Assume that $X_1,\ldots,X_n$ are i.i.d. and let $b^*$ be their (population) barycenter. Denote by $\hat b_n$ their empirical barycenter and by $\tilde b_n$ their inductive barycenter. Then, of particular interest is taking $f=d(\cdot,b^*)$ in \eqref{eq:bary1}. Let $\sigma^2=\E[d(X_1,b^*)^2]$ be the variance of $X_1$. When $T_n$ is the inductive barycenter of $X_1,\ldots,X_n$, it follows from Sturm's law of large numbers \cite[Theorem 4.7]{sturm03} that $\E[d(T_n,b^*)]\leq \frac{\sigma}{\sqrt{n}}$. When $T_n$ is the empirical barycenter of $X_1,\ldots,X_n$, the same inequality holds thanks to \cite[Corollary 11]{fastconv} when the space $(M,d)$ has moreover its curvature bounded from below. Therefore, we get the following corollary.

\begin{corollary} \label{cor:Hoeffding}
    Under the same assumptions and notation as in Theorem \ref{thm1}, let $T_n$ be either the empirical or the inductive barycenter of $X_1,\ldots,X_n$. If $T_n$ is the empirical barycenter $\hat b_n$, further assume that $(M,d)$ has curvature bounded from below. Then, for all $\delta\in (0,1)$, it holds with probability at least $1-\delta$ that 
    $$d(T_n,b^*)\leq \frac{\sigma}{\sqrt n}+K\sqrt{\frac{\log(1/\delta)}{n}}$$
\end{corollary}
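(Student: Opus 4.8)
The plan is to derive this corollary directly from the sub-Gaussian concentration bound \eqref{eq:bary1}, specialized to a single well-chosen test function, and then to control the remaining expectation term by the available law-of-large-numbers bounds for barycenters.

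First I would record that the function $f := d(\cdot, b^*): M \to \R$ is $1$-Lipschitz, since $|d(x,b^*) - d(y,b^*)| \leq d(x,y)$ by the triangle inequality, so it is an admissible test function in the class $\mathcal F$. In the i.i.d.\ setting the $X_i$ share a common distribution and hence a common sub-Gaussian constant $K$, so Theorem \ref{thm1} guarantees that $T_n$ (whether empirical or inductive) is $(K^2/n)$-sub-Gaussian. Plugging this choice of $f$ into \eqref{eq:bary1} then yields, with probability at least $1-\delta$,
\begin{equation*}
d(T_n, b^*) \leq \E[d(T_n, b^*)] + K\sqrt{\frac{\log(1/\delta)}{n}},
\end{equation*}
which already isolates the sub-Gaussian fluctuation term with the correct $\sqrt{\log(1/\delta)/n}$ rate. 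It remains only to bound the deterministic bias $\E[d(T_n, b^*)]$ by $\sigma/\sqrt n$.

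For the bias term I would invoke the second-moment convergence bounds and then pass to the first moment via Jensen's inequality. When $T_n$ is the inductive barycenter, Sturm's law of large numbers \cite[Theorem 4.7]{sturm03} gives $\E[d(T_n, b^*)^2] \leq \sigma^2/n$ directly. When $T_n$ is the empirical barycenter, the identical second-moment bound follows from \cite[Corollary 11]{fastconv}, but now the lower curvature bound is genuinely needed: it is what rules out geodesic branching and controls the tangent cones, and this is precisely why the extra hypothesis appears in the statement. In either case, Jensen's inequality yields $\E[d(T_n, b^*)] \leq \sqrt{\E[d(T_n, b^*)^2]} \leq \sigma/\sqrt n$, and substituting this into the display above finishes the argument.

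The proof is essentially an assembly of ingredients already in place, so I expect no serious obstacle; the substantive content lives in Theorem \ref{thm1} (sub-Gaussianity of the barycenter) and in the cited expectation bounds. The two points deserving care are the case distinction for the bias — remembering that the empirical case is conditional on the lower curvature assumption, which is the only genuinely nontrivial geometric input — and the elementary but conceptually important observation that the dimension of $M$ enters the final bound only through $\sigma$ in the bias term, leaving the probability-dependent term dimension-free.
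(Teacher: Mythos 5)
Your proposal is correct and follows essentially the same route as the paper: apply the sub-Gaussian deviation bound \eqref{eq:bary1} with the $1$-Lipschitz test function $f=d(\cdot,b^*)$ (noting $\bar K_n=K\sqrt n$ in the i.i.d.\ case), then bound $\E[d(T_n,b^*)]$ by $\sigma/\sqrt n$ via Jensen's inequality together with \cite[Theorem 4.7]{sturm03} for the inductive barycenter and \cite[Corollary 11]{fastconv} (under the lower curvature bound) for the empirical one. Your explicit mention of the Jensen step, which the paper leaves implicit, is a welcome clarification rather than a deviation.
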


We can now state Hoeffding's inequality in global NPC spaces.

\begin{corollary}\label{hoeffding}
Let $(M,d)$ be a NPC space, and let $X_1,...,X_n$ be i.i.d random variables and assume that $X_1\in B(x_0,C)$ almost surely, for some $x_0\in M$ and $C>0$. Let $T_n$ be either the empirical or the inductive barycenter of $X_1,\ldots,X_n$. If $T_n$ is the empirical barycenter $\hat b_n$, further assume that $(M,d)$ has curvature bounded from below. Then, for all $\delta\in (0,1)$, it holds with probability at least $1-\delta$ that 
    $$d(T_n,b^*)\leq \frac{\sigma}{\sqrt n}+2C\sqrt{\frac{\log(1/\delta)}{n}}.$$
\end{corollary}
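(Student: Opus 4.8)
The plan is to derive Corollary \ref{hoeffding} as a straightforward specialization of Corollary \ref{cor:Hoeffding}, the only new ingredient being the conversion of the boundedness hypothesis into an explicit sub-Gaussian constant. First I would observe that the assumption $X_1 \in B(x_0, C)$ almost surely means precisely that $d(x_0, X_1) \leq C$ almost surely, so Lemma \ref{boundedsubgauss} applies directly and tells us that $X_1$ is $4C^2$-sub-Gaussian. Since the $X_i$ are i.i.d., each $X_i$ is $4C^2$-sub-Gaussian, so in the notation of Theorem \ref{thm1} we may take $K_i = 2C$ for every $i$, and hence $K = \bar K_n / \sqrt{n} = 2C$ in the instantiation of Corollary \ref{cor:Hoeffding}.

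With this value of $K$ in hand, I would simply invoke Corollary \ref{cor:Hoeffding}: under the stated hypotheses (NPC space, and curvature bounded from below in the empirical-barycenter case), for every $\delta \in (0,1)$ it holds with probability at least $1-\delta$ that
\begin{equation*}
d(T_n, b^*) \leq \frac{\sigma}{\sqrt n} + K\sqrt{\frac{\log(1/\delta)}{n}} = \frac{\sigma}{\sqrt n} + 2C\sqrt{\frac{\log(1/\delta)}{n}},
\end{equation*}
which is exactly the claimed bound. One small point worth checking in passing is that the bounded random variable indeed has finite first (and in fact all) moments, so that the population barycenter $b^*$ and the variance $\sigma^2 = \E[d(X_1,b^*)^2]$ are well defined; this is immediate from $d(x_0, X_1) \leq C$ together with the triangle inequality, and it is also implied by the remark after Lemma \ref{lemmclassiquesousGauss} that a sub-Gaussian variable has infinitely many moments.

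I do not anticipate a genuine obstacle here, since all the analytic work has already been done upstream. The only place requiring mild care is bookkeeping the constants: Lemma \ref{boundedsubgauss} yields sub-Gaussian parameter $4C^2$ (not $C^2$ or $2C^2$), so that $K_i = 2C$ and the factor of $2$ in front of $C$ in the final bound is correct. Everything else is a direct substitution into the already-established high-probability inequality of Corollary \ref{cor:Hoeffding}.
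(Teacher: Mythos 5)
Your proposal is correct and follows exactly the route the paper intends: Corollary \ref{hoeffding} is stated immediately after Corollary \ref{cor:Hoeffding} precisely as its specialization to bounded variables, with Lemma \ref{boundedsubgauss} supplying the sub-Gaussian constant $K^2=4C^2$, i.e.\ $K=2C$. Your bookkeeping of the constant (that $4C^2$-sub-Gaussian gives $K=2C$, hence the factor $2C$ in the bound) matches the paper's statement.
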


The left hand side of this concentration inequality contains two terms. The first one is a bias term, which simply controls the expected distance from $T_n$ to $b^*$ and the second one is a stochastic term, which contains the probability level $\delta$. If $(M,d)$ is a Hilbert space (which is a special instance of NPC spaces), this inequality actually reads as
$$d(T_n,b^*)\leq \sqrt{\frac{\tr \Sigma}{ n}}+2C\sqrt{\frac{\log(1/\delta)}{n}},$$
where $\Sigma$ is the covariance operator of $X_1$. For instance, if $M=\R^p$ equipped with the Euclidean structure and $\Sigma$ is the identity matrix, then $\tr \Sigma=p$ and the dimension of $M$ only appears in the bias term, not the stochastic one. This is why Corollary \ref{cor:Hoeffding} is a significant improvement over Funano's result \cite{Funano10}.

As a consequence of Corollary \ref{cor:Hoeffding}, we get the following algorithmic PAC guarantee for the computation of the barycenter of $n$ fixed points $x_1,\ldots,x_n$ ($n\geq 1$) in an NPC space $(M,d)$. Fix $\varepsilon>0$ and $\delta\in (0,1)$. Sample $m$ integers $I_1,\ldots,I_m$ independently, uniformly at random between $1$ and $n$, and set $X_1=x_{I_1}, \ldots, X_m=x_{I_m}$, which are i.i.d. random variables with distribution $\mu=n^{-1}\sum_{i=1}^n\delta_{x_i}$. Finally, iteratively compute $\tilde b_m$, the inductive barycenter of $X_1,\ldots,X_m$.

\begin{corollary} \label{cor:algo}
    Denote by $b^*$ the barycenter of $x_1,\ldots,x_n$. Let $D$ be the diameter of the set $\{x_1,\ldots,x_n\}$. Then, if $m\gtrsim \frac{D^2}{\varepsilon^2}\max(1,\log(1/\delta))$, it holds that $d(\tilde b_m,b^*)\leq \varepsilon$ with probability at least $1-\delta$.
\end{corollary}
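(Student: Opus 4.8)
The plan is to recast the statement as an instance of Corollary~\ref{hoeffding}. The sampled points $X_1=x_{I_1},\ldots,X_m=x_{I_m}$ are i.i.d.\ with common law $\mu=n^{-1}\sum_{i=1}^n\delta_{x_i}$, and since minimizing $\int_M d(\cdot,b)^2\diff\mu$ over $b\in M$ is the same optimization problem as minimizing $\sum_{i=1}^n d(x_i,b)^2$, the population barycenter of $\mu$ is exactly $b^*$. Hence Corollary~\ref{hoeffding} applies with the inductive barycenter $T_m=\tilde b_m$ and target $b^*$; crucially, for the inductive barycenter the bound holds \emph{without} the ``curvature bounded from below'' hypothesis, which is precisely why the statement is phrased in terms of $\tilde b_m$ rather than $\hat b_m$.

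It then remains to identify the two constants entering Corollary~\ref{hoeffding}, namely the almost-sure radius $C$ and the variance $\sigma^2=\E[d(X_1,b^*)^2]$, in terms of the diameter $D$. For the radius, fixing $x_0=x_1$ gives $d(x_0,X_1)\leq D$ almost surely, since $X_1$ takes values in $\{x_1,\ldots,x_n\}$ and this set has diameter $D$; thus one may take $C=D$. For the variance, I would compare the minimal value $\sum_{i=1}^n d(x_i,b^*)^2$ with the value of the same objective at the feasible point $b=x_1$, which is at most $(n-1)D^2$; dividing by $n$ yields $\sigma^2\leq D^2$, i.e.\ $\sigma\leq D$.

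Plugging $C=D$ and $\sigma\leq D$ into Corollary~\ref{hoeffding}, with probability at least $1-\delta$ one obtains
$$d(\tilde b_m,b^*)\leq \frac{\sigma}{\sqrt m}+2D\sqrt{\frac{\log(1/\delta)}{m}}\leq \frac{D}{\sqrt m}\left(1+2\sqrt{\log(1/\delta)}\right).$$
Requiring the right-hand side to be at most $\varepsilon$ is equivalent to $m\geq \frac{D^2}{\varepsilon^2}\bigl(1+2\sqrt{\log(1/\delta)}\bigr)^2$, and the elementary inequality $(1+2\sqrt L)^2\leq 9\max(1,L)$ (applied with $L=\log(1/\delta)$, using $\sqrt L\leq\max(1,L)$) turns this into the claimed condition $m\gtrsim \frac{D^2}{\varepsilon^2}\max(1,\log(1/\delta))$, the numerical constant being absorbed into the symbol $\gtrsim$.

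There is no serious obstacle here: the result is essentially a specialization of Corollary~\ref{hoeffding} to the empirical measure of the $x_i$, with the observation that the inductive barycenter avoids the lower curvature bound. The only points requiring a genuine (if elementary) argument are the variance estimate $\sigma\leq D$ and the final conversion of the two-term tail bound into a single sample-complexity statement with the $\max(1,\log(1/\delta))$ scaling; both are routine.
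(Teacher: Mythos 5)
Your proposal is correct and follows essentially the same route as the paper, which simply applies the Hoeffding-type corollary for the inductive barycenter to the empirical measure $\mu=n^{-1}\sum_{i=1}^n\delta_{x_i}$ together with a diameter bound on the variance (the paper uses the cruder bound $\sigma^2\leq 4D^2$, while your $\sigma^2\leq D^2$ via minimality at $b=x_1$ is sharper but equivalent up to the constant absorbed in $\gtrsim$). Your explicit handling of the $\max(1,\log(1/\delta))$ conversion and the observation that the inductive barycenter avoids the lower curvature hypothesis are both accurate and consistent with the paper's intent.
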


The proof of this corollary simply uses the fact that the variance $\sigma^2$ of $\mu$, which is supported on a set of diameter $D$, is bounded by $4D^2$. To the best of our knowledge, Corollary \ref{cor:algo} is the first PAC guarantee for the computation of barycenters in metric spaces. Note that this bound can even be further improved, using Theorem~\ref{thmbernstein} below, see Section~\ref{sec:Bernstein}.

In comparison to this guarantee, \cite[Theorem 3.4]{LimPalfia14} gives a deterministic guarantee for finding an $\varepsilon$-approximation of the barycenter of $x_1,\ldots,x_n$, after $\frac{n(D^2+\sigma^2)}{\varepsilon^2}$ steps: The complexity of their algorithm is $n$ times worse than ours, where $n$ is the number of input points.

Perhaps surprinsingly, the sample complexity bound given in Corollary \ref{cor:algo} does not require any bound on some notion of dimension of the space $M$. However, the algorithm implicitly requires the computation of geodesic segments, at each iteration of the sequence of inductive barycenters $\tilde b_1,\ldots,\tilde b_m$. Such computations have a complexity which, in general, shall depend on the dimension (in a certain sense) of $M$. When $(M,d)$ is a metric tree, the computation of the inductive barycenters simply requires to identify the shortest paths between any two points, which can be done efficiently.

\subsection{An example: Matrix geometric means} \label{sec:GM}

In this section, we consider the computation of the geometric mean of $n$ symmetric, positive definite matrices $A_1,\ldots,A_n\in \mathcal S_p$ ($p\geq 1$). Recall that the geometric mean of $A_1,\ldots,A_p$ is their barycenter, associated with the metric $d(A,B)=\|\log(A^{-1/2BA^{-1/2}}\|_{\textsf{F}}$, for all $A,B\in\mathcal S_p$. It is known that $\mathcal S_p$, equipped with this metric, is an NPC space \cite[Proposition 5]{bhatia2006riemannian}. The geometric mean of two matrices $A,B\in\mathcal S_p$ is the matrix $A\# B=A^{1/2}(A^{-1/2}BA^{-1/2})^{1/2}A^{1/2}$ and more generally, the geodesic segment between $A$ and $B$ is given by $\gamma_{A,B}(s)=A^{1/2}(A^{-1/2}BA^{-1/2})^sA^{1/2}$, also denoted by $A\#_s B$, for all $s\in [0,1]$.

The sequence of inductive barycenters is defined as follows:
\begin{itemize}
    \item $S_1=A_1$;
    \item $S_2=S_1\# A_2=A_1^{1/2}(A_1^{-1/2}A_2A_1^{-1/2})^{1/2}A_1^{1/2}$;
    \item For $k=2,\ldots,m$, $S_k = S_{k-1}\#_{\frac{1}{k}} A_k=S_{k-1}^{1/2}(S_{k-1}^{-1/2}A_kS_{k-1}^{-1/2})^{1/k}S_{k-1}^{1/2}$.
\end{itemize}
Exact computation of each $S_k, k=2,\ldots,m$, requires matrix products and eigendecompositions, whose complexity depends on the size $p$ of the matrices. In fact, there are faster ways to compute good approximations of $A\#_s B$, for $A,B\in\mathcal S_p$ and $s\in [0,1]$, e.g., by using integral representations and Gaussian quadrature: We refer, for instance, to \cite{bhatia2009positive,simon2019loewner} for more details.

\subsection{Bernstein inequality: A refinement in the case of small variance} \label{sec:Bernstein}

In this section, we derive a Bernstein inequality refining Hoeffding's lemma in the case where the $X_i$'s have a small variance, i.e., much smaller than the diameter of their support. 

\begin{theorem}\label{thmbernstein}
Let $(M,d)$ be an NPC space and $X_1,\ldots,X_n$ be i.i.d. random variables in $(M,d)$. Assume that $d(X_1,x_0)\leq C$ almost surely, for some $x_0\in M$ and $C>0$. Then, for all $1$-Lipschitz functions $f:M\to\R$ and for all $\delta\in (0,1)$,
$$f(T_n)\leq \E[f(T_n)]+\min\left(2\sigma\sqrt{\frac{\log(1/\delta)}{n}},\frac{8C\log(1/\delta)}{3n}\right),$$
where $T_n$ denotes either the empirical or the inductive barycenter of $X_1,...,X_n$ and $\sigma^2=\E[d(X_1,b^*)^2]$ denotes the variance of $X_1$, with $b^*$ the barycenter of the distribution of $X_1$.
\end{theorem}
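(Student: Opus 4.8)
The plan is to reduce the statement to a one-dimensional deviation inequality for the real-valued random variable $Z := f(T_n)-\E[f(T_n)]$ and to control its moment generating function through the Laplace-transform machinery of Section~\ref{sec:Laplace}. By Proposition~\ref{baryarelip}, both barycenter maps are $(1/n)$-Lipschitz from $(M^n,d_1)$ to $M$; composing with the $1$-Lipschitz function $f$, the map $\Phi := f\circ T_n : (M^n,d_1)\to\R$ is $(1/n)$-Lipschitz. Taking the identity as a test function in the definition of the Laplace transform on $\R$, then invoking Lemma~\ref{lemma:LipLap} followed by Lemma~\ref{produitdelaplace} together with the i.i.d.\ assumption, I would obtain
\[
\E\!\left[e^{\lambda Z}\right]\ \le\ \Lambda_{\Phi(X_1,\ldots,X_n)}(\lambda)\ \le\ \Lambda_{(X_1,\ldots,X_n)}(\lambda/n)\ \le\ \Lambda_{X_1}(\lambda/n)^n,\qquad \lambda\ge 0 .
\]
Everything then hinges on a Bernstein-type bound for the single-variable Laplace transform $\Lambda_{X_1}$.

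The heart of the proof, and the step I expect to be the main obstacle, is to upgrade the crude sub-Gaussian estimate of Lemma~\ref{boundedsubgauss} (which only sees the diameter $C$) into a variance-sensitive bound, namely $\Lambda_{X_1}(\lambda)\le\exp\!\big(\tfrac{\lambda^2\sigma^2/2}{1-\frac{2}{3}C\lambda}\big)$ for $0\le\lambda<\tfrac{3}{2C}$. To get this uniformly over all $1$-Lipschitz $f$, I would fix such an $f$, write $Y=f(X_1)-\E[f(X_1)]$, and establish two facts. First, a uniform variance bound: since $f$ is $1$-Lipschitz and the variance is the infimum of $\E[(f(X_1)-c)^2]$ over $c\in\R$, the choice $c=f(b^*)$ gives $\var(f(X_1))\le\E[d(X_1,b^*)^2]=\sigma^2$. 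Second, a uniform range bound: from $d(X_1,x_0)\le C$ almost surely we get $|f(X_1)-f(x_0)|\le C$ and $|\E[f(X_1)]-f(x_0)|\le C$, hence $|Y|\le 2C$ almost surely. With $\E[Y]=0$, $\E[Y^2]\le\sigma^2$ and $|Y|\le 2C$, the classical moment estimate $\E[Y^k]\le\sigma^2(2C)^{k-2}$ for $k\ge 2$ lets me sum the exponential series into the Bennett form $\E[e^{\lambda Y}]\le\exp\!\big(\tfrac{\sigma^2}{4C^2}(e^{2C\lambda}-1-2C\lambda)\big)$, from which the displayed Bernstein bound follows via $e^{x}-1-x\le\tfrac{x^2/2}{1-x/3}$; taking the supremum over $f$ gives the claim for $\Lambda_{X_1}$.

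Substituting $\lambda/n$ and tensorizing then gives $\E[e^{\lambda Z}]\le\exp\!\big(\tfrac{(\lambda^2/2)(\sigma^2/n)}{1-2C\lambda/(3n)}\big)$ for $0\le\lambda<3n/(2C)$, and I would run two Chernoff optimizations producing the two terms of the minimum. In the small-deviation (sub-Gaussian) regime I restrict to $\lambda\le 3n/(4C)$, so that the denominator is at least $1/2$ and $\E[e^{\lambda Z}]\le e^{\lambda^2\sigma^2/n}$; optimizing $-\lambda t+\lambda^2\sigma^2/n$ and inverting the resulting Gaussian tail yields the first term $2\sigma\sqrt{\log(1/\delta)/n}$. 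In the large-deviation regime I push $\lambda$ toward the upper end of the admissible range, where the Bennett form controls the tail, yielding the sub-exponential term $\tfrac{8C\log(1/\delta)}{3n}$, the factor $8/3$ arising from the explicit choice of $\lambda$ there. Combining the two regimes gives the stated bound. The delicate points, beyond the uniform moment generating function estimate above, are tracking the exact constants through the two optimizations and verifying that the two regimes overlap, so that for every $\delta\in(0,1)$ the relevant term of the minimum is covered by a genuinely valid tail bound.
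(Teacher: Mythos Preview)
Your proposal is correct and follows essentially the same route as the paper: bound $\var(f(X_1))\le\sigma^2$ via the choice $c=f(b^*)$, use the range bound $|f(X_1)-\E f(X_1)|\le 2C$ to obtain the Bennett-type estimate $\Lambda_{X_1}(\lambda)\le\exp\!\big(\tfrac{\sigma^2}{4C^2}(e^{2C\lambda}-1-2C\lambda)\big)$, tensorize via Lemma~\ref{produitdelaplace}, rescale by $1/n$ via Proposition~\ref{baryarelip} and Lemma~\ref{lemma:LipLap}, and finish with Chernoff together with the relaxation $h(u)\ge u^2/(2(1+u/3))$. The only cosmetic differences are that the paper reaches the Bennett bound through the monotonicity of $u\mapsto(e^u-1-u)/u^2$ rather than your moment-series argument, and optimizes $\lambda$ in the Bennett form before relaxing to Bernstein rather than after; note also that the ``$\min$'' in the displayed conclusion is a typo for ``$\max$'' (cf.\ Corollary~\ref{cor:bernstein}), a point the paper's proof, like your sketch, leaves implicit in the final ``yields the desired result.''
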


When the $X_i$'s have a very small variance, i.e., significantly smaller than the diameter $C$ of their support, this inequality greatly improves Corollary \ref{cor:Hoeffding}. By taking $f=d(\cdot,b^*)$, we obtain the following version of Bernstein's inequality in NPC spaces.

\begin{corollary} \label{cor:bernstein}
    With the same assumptions and notation as in Theorem \ref{thmbernstein}, let $T_n$ be either the empirical or the inductive barycenter of $X_1,\ldots,X_n$. If $T_n$ is the empirical barycenter $\hat b_n$, further assume that $(M,d)$ has curvature bounded by below. Then, with probability at least $1-\delta$, it holds that 
    $$d(T_n,b^*)\leq \frac{\sigma}{\sqrt{n}}+\max\left(2\sigma\sqrt{\frac{\log(1/\delta)}{n}},\frac{8C\log(1/\delta)}{3n}\right).$$
    
\end{corollary}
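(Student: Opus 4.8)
The plan is to obtain Corollary \ref{cor:bernstein} as a direct specialization of Theorem \ref{thmbernstein}, applied to the single $1$-Lipschitz function $f = d(\cdot, b^*)$, followed by a control of the resulting bias term exactly as in the derivation of Corollary \ref{cor:Hoeffding}. First I would observe that $d(\cdot, b^*) : M \to \R$ is $1$-Lipschitz by the triangle inequality, so Theorem \ref{thmbernstein} applies verbatim and yields, with probability at least $1 - \delta$,
$$
d(T_n, b^*) \le \E[d(T_n, b^*)] + \min\left(2\sigma\sqrt{\frac{\log(1/\delta)}{n}},\, \frac{8C\log(1/\delta)}{3n}\right).
$$
Since $\min(a,b) \le \max(a,b)$ for all $a,b \ge 0$, it then suffices to bound the bias $\E[d(T_n, b^*)]$ by $\sigma/\sqrt{n}$ in order to recover the stated inequality.

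Second, I would control the bias. By Jensen's inequality (equivalently Cauchy--Schwarz), $\E[d(T_n, b^*)] \le \sqrt{\E[d(T_n, b^*)^2]}$, so it is enough to establish the second-moment bound $\E[d(T_n, b^*)^2] \le \sigma^2/n$. When $T_n$ is the inductive barycenter, this is precisely Sturm's law of large numbers \cite[Theorem 4.7]{sturm03}, valid in any NPC space. When $T_n$ is the empirical barycenter, the same bound is supplied by \cite[Corollary 11]{fastconv}, which is exactly why the additional hypothesis that $(M,d)$ has curvature bounded from below is imposed in that case. Combining this with the display above gives, with probability at least $1 - \delta$,
$$
d(T_n, b^*) \le \frac{\sigma}{\sqrt{n}} + \min\left(2\sigma\sqrt{\frac{\log(1/\delta)}{n}},\, \frac{8C\log(1/\delta)}{3n}\right) \le \frac{\sigma}{\sqrt{n}} + \max\left(2\sigma\sqrt{\frac{\log(1/\delta)}{n}},\, \frac{8C\log(1/\delta)}{3n}\right),
$$
which is the claim.

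I do not expect any genuine obstacle here: the corollary is a one-line consequence of Theorem \ref{thmbernstein}, and the only external input is the second-moment control of the bias, which is already invoked in Corollary \ref{cor:Hoeffding}. The single point requiring care is that this bias bound is available for the empirical barycenter only under the curvature-lower-bound assumption (through \cite[Corollary 11]{fastconv}), whereas for the inductive barycenter Sturm's theorem applies in full generality; this asymmetry is exactly what is reflected in the hypotheses. I would also remark that the stochastic term produced by the theorem is in fact the $\min$ of the two expressions, which is sharper than the $\max$ recorded here and is precisely what makes the bound a refinement of Corollary \ref{cor:Hoeffding} in the small-variance regime $\sigma \ll C$.
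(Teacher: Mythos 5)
Your proof is correct and follows exactly the paper's (implicit) argument: apply Theorem \ref{thmbernstein} to the $1$-Lipschitz function $d(\cdot,b^*)$, then bound the bias $\E[d(T_n,b^*)]\leq \sqrt{\E[d(T_n,b^*)^2]}\leq \sigma/\sqrt{n}$ via \cite[Theorem 4.7]{sturm03} for the inductive barycenter or \cite[Corollary 11]{fastconv} for the empirical one (which is precisely where the curvature-lower-bound hypothesis enters). One caveat on your closing remark: the tail bound $\exp\left(-\frac{nt^2}{2(\sigma^2+2Ct/3)}\right)$ at the end of the theorem's proof actually inverts to the \emph{maximum} of the two terms rather than the minimum (the $\min$ form fails when $\sigma\ll C$), so the corollary's $\max$ is the form the argument genuinely supports, and your step ``$\min\leq\max$'' is harmless but rests on a statement of Theorem \ref{thmbernstein} that appears to contain a typo.
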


\begin{proof}[Proof of Theorem \ref{thmbernstein}]

For all $\lambda\geq 0$ and $f\in\mathcal F$, let $\psi(\lambda,f)=\log\E[e^{\lambda(f(X_1)-\E[f(X_1)])}]$, for $\lambda\geq 0$ and $f\in\mathcal F$. Using the inequality $\log(u)\leq 1-u$, for all $u>0$, it holds that 
\begin{align*}
    & \psi(\lambda,f) \leq \\
    & \quad \E \left[e^{\lambda(f(X_1)-\E[f(X_1)])} -1 -\lambda(f(X_1)-\E f(X_1))\right],
\end{align*}
for all $\lambda\geq 0$ and $f\in\mathcal F$. Since $X_1\in B(x_0,C)$ almost surely, we have that $f(X_1)-\E f(X_1)\leq 2C$, for all $f\in\mathcal F$. Therefore, since the map $u\mapsto \frac{e^u -1 -u}{u^2}$ is increasing, we obtain that
\begin{align*}
    \psi(\lambda,f) & \leq \E \left[ \frac{e^{2\lambda C} -1 -2\lambda C}{4C^2}(f(X_1)-\E f(X_1))^2\right] \\
    & = \frac{e^{2\lambda C} -1 -2\lambda C}{4C^2}\var(f(X_1)).
\end{align*}
Moreover, for all $f\in\mathcal F$, $\var(f(X_1))\leq \sigma^2$. Indeed, 
\begin{align}
    \var(f(X_1)) & = \E\left[(f(X_1)-\E[f(X_1)])^2\right] \nonumber \\
    & \leq \E\left[(f(X_1)-f(b^*))^2\right] \label{eq:step0} \\
    & \leq \E\left[d(X_1,b^*)^2\right] \nonumber \\
    & = \sigma^2, \nonumber
\end{align}
where \eqref{eq:step0} follows from the fact that $\E\left[(f(X_1)-\E[f(X_1)])^2\right]\leq \E\left[(f(X_1)-a)^2\right]$ for all $a\in\R$, and in particular for $a=f(b^*)$.

By Lemma \ref{produitdelaplace}, it follows that 
$$\Lambda_{X_1,\ldots,X_n}(\lambda)\leq e^{\frac{n\sigma^2}{4C^2}(e^{2\lambda C} -1 -2\lambda C)},$$
for all $\lambda\geq 0$.
Now, let $T_n$ be either the empirical or the inductive barycenter of $X_1,\ldots,X_n$. It follows from Proposition \ref{baryarelip} and Lemma \ref{lemma:LipLap} that
$$\Lambda_{T_n}(\lambda)\leq e^{\frac{n\sigma^2}{4C^2}(e^{2\lambda C/n} -1 -2\lambda C/n)},$$
for all $\lambda\geq 0$.



By Chernoff's bound, we obtain, for all $f\in\mathcal F$,
\begin{align*}
& P\left( f(T_n)-\E f(T_n)\geq t \right)\\
& \leq e^{-\lambda t}\E e^{\lambda(f(T_n)-\E f(T_n))}\\
&\leq e^{-\lambda t + \log\Lambda_{T_n}(\lambda)}\\
&\leq \exp\left[-\lambda t + \left(e^{\frac{2\lambda C}{n}} -1 -\frac{2\lambda C}{n}\right)\frac{n\sigma^2}{4C^2}\right].
\end{align*}
By optimizing in $\lambda\geq 0$, we find the best one to be $\lambda=\frac{n}{2C}\log\left(1+\frac{2Ct}{\sigma^2}\right)$ and therefore,
$$P\left( f(T_n)-\E f(T_n)\geq t \right) \leq \exp\left(-\frac{n\sigma^2}{4C^2}\, h\left(\frac{2Ct}{\sigma^2}\right)\right)$$
where $h(u)=(1+u)\log(1+u) -u$, for all $u\geq 0$.
Since $h(u)\geq \frac{u^2}{2(1+\frac{u}{3})}$ for all $u\geq 0$, we obtain
$$P\left( f(T_n)-\E f(T_n)\geq t \right) \leq \exp\left(-\frac{nt^2}{2(\sigma^2+\frac{2Ct}{3})} \right),$$
which yields the desired result.
\end{proof}


\begin{remark}
    In the above concentration inequalities, we assume that the space $(M,d)$ has curvature bounded by below, when we consider empirical barycenters. The reason is to be able to apply the bound on the expectation of $d(\hat b_n,b^*)$ proven in \cite[Corollary 11]{fastconv}. Indeed, for general NPC spaces, there is no available upper bound on $\E[d(\hat b_n,b^*)]$ (however, the bound that is available for $S_n$ \cite[Theorem 4.7]{sturm03} is valid in any NPC space). We do believe that the bound $\E[d(\hat b_n,b^*)^2]\leq \frac{\sigma^2}{n}$ is valid in all NPC spaces, even those with no lower bound on the curvature, i.e., those that can be branching. The simplest and most natural example of an NPC space with no lower curvature bound is a metric tree. Indeed, such a space is $\CAT(\kappa)$, for all $\kappa\in\R$, since all the geodesic triangles are flat (see Section~\ref{cat} for formal definitions). In a metric tree, barycenters are known to be sticky, i.e., even a large perturbation of some of the points $x_1,\ldots,x_n$ may result in no change of their barycenter \cite[Proposition 5.7]{sturm03}. In particular, barycenters are more rigid in such spaces and empirical barycenters are known to have faster asymptotic rates \cite{hotz2013sticky}. This is why we expect that their behavior should not be worse than in NPC spaces with lower curvature bounds. 

\end{remark}

Theorem \ref{thmbernstein} yields the following algorithmic corollary, which is a refinement of Corollary \ref{cor:algo}.

\begin{corollary} \label{cor:algo2}
    Recall the notation of Corollary \ref{cor:algo}. Let $D$ be the diameter of the set $\{x_1,\ldots,x_n\}$ and $\sigma^2=\min_{b\in M} \frac{1}{n}\sum_{i=1}^n d(x_i,b)^2$. Then, if 
\begin{equation} \label{eq:algoB}
    m\gtrsim \max\left(\frac{\sigma^2}{\varepsilon^2},\frac{D}{\varepsilon}\right)\log(1/\delta),
\end{equation}
it holds that $d(\tilde b_m,b^*)\leq \varepsilon$ with probability at least $1-\delta$.
\end{corollary}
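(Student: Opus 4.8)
The plan is to apply the inductive-barycenter version of Bernstein's inequality, namely Corollary~\ref{cor:bernstein}, to the i.i.d. sample $X_1,\ldots,X_m$ drawn from $\mu=n^{-1}\sum_{i=1}^n\delta_{x_i}$. First I would record the three structural facts that make it applicable. Since $\mu$ is the empirical measure of $x_1,\ldots,x_n$, its barycenter coincides with the barycenter $b^*=B_n(x_1,\ldots,x_n)$ of the fixed points, and its variance is exactly $\sigma^2=\min_{b\in M}\frac1n\sum_{i=1}^n d(x_i,b)^2=\E[d(X_1,b^*)^2]$, matching the $\sigma^2$ in the statement. Next, because $\{x_1,\ldots,x_n\}$ has diameter $D$, choosing $x_0=x_1$ gives $d(X_1,x_0)\le D$ almost surely, so the boundedness constant in Theorem~\ref{thmbernstein} may be taken to be $C=D$. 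Finally, since $\tilde b_m$ is the \emph{inductive} barycenter, Corollary~\ref{cor:bernstein} applies with no lower curvature bound required on $(M,d)$.

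With $T_m=\tilde b_m$, $f=d(\cdot,b^*)$ and $C=D$, Corollary~\ref{cor:bernstein} yields, with probability at least $1-\delta$,
$$d(\tilde b_m,b^*)\le \frac{\sigma}{\sqrt m}+\max\left(2\sigma\sqrt{\frac{\log(1/\delta)}{m}},\frac{8D\log(1/\delta)}{3m}\right).$$
It then remains to show the right-hand side is at most $\varepsilon$ under the stated sample-size condition. I would bound the three terms separately, requiring each to be at most a fixed fraction of $\varepsilon$ (say $\varepsilon/3$). The bias term $\sigma/\sqrt m$ is controlled once $m\gtrsim \sigma^2/\varepsilon^2$; the sub-Gaussian term $2\sigma\sqrt{\log(1/\delta)/m}$ once $m\gtrsim (\sigma^2/\varepsilon^2)\log(1/\delta)$; and the Bernstein term $\frac{8D}{3m}\log(1/\delta)$ once $m\gtrsim (D/\varepsilon)\log(1/\delta)$. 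Taking the largest of these thresholds reproduces precisely the condition \eqref{eq:algoB}.

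The only point requiring a little care is the bias term $\sigma/\sqrt m$, whose threshold $m\gtrsim\sigma^2/\varepsilon^2$ carries no factor $\log(1/\delta)$ and is therefore not literally implied by \eqref{eq:algoB} when $\log(1/\delta)<1$. In the PAC regime of interest one has $\delta\le 1/e$, so $\log(1/\delta)\ge1$ and the bias threshold is absorbed into the sub-Gaussian one; more generally one simply replaces $\log(1/\delta)$ by $\max(1,\log(1/\delta))$, exactly as in Corollary~\ref{cor:algo}. Beyond this bookkeeping, and the routine step of turning the inner two-branch maximum into two separate constraints on $m$, the argument is just a substitution of $\sigma^2$ and $C=D$ into Corollary~\ref{cor:bernstein}, so I do not anticipate a genuine obstacle. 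The whole content is the improved dependence $\sigma^2/\varepsilon^2$ in place of the cruder $D^2/\varepsilon^2$ of Corollary~\ref{cor:algo}, which is precisely what the Bernstein refinement buys when the variance is small relative to the diameter.
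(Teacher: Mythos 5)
Your argument is correct and is exactly the intended derivation: the paper gives no explicit proof of Corollary~\ref{cor:algo2}, presenting it as an immediate consequence of Theorem~\ref{thmbernstein}/Corollary~\ref{cor:bernstein} applied to the i.i.d.\ sample from $\mu=n^{-1}\sum_i\delta_{x_i}$ with $C=D$ and the exact variance $\sigma^2$ in place of the crude bound used in Corollary~\ref{cor:algo}. Your remark about the bias term $\sigma/\sqrt{m}$ requiring $\max(1,\log(1/\delta))$ rather than $\log(1/\delta)$ is a legitimate (minor) bookkeeping point that the paper's statement glosses over, and you resolve it in the same way the paper does for Corollary~\ref{cor:algo}.
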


In practice, of course, computing $\sigma^2$ can be costly. However, in \eqref{eq:algoB}, it can be replaced by any upper bound, such as $\frac{1}{n}d(x_i,b)^2$ for any fixed $b\in M$, or $\frac{1}{n^2}\sum_{1\leq i,j\leq n} d(x_i,x_j)^2$. Computing $\frac{1}{n}d(x_i,b)^2$ only requires $n$ computations, but the choice of $b$ can be suboptimal. On the other hand, computing $\frac{1}{n^2}\sum_{1\leq i,j\leq n} d(x_i,x_j)^2$ requires a quadratic (in $n$) number of computations, but yields a tight bound on $\sigma^2$, up to a factor $2$. Indeed, one has the following lemma.

\begin{lemma}
    Let $X$ be a random variable in $M$ with two moments and let $Y$ be an independent copy of $M$. Let $\sigma^2=\min_{b\in M}\E[d(X,b)^2]$. Then, 
$$\sigma^2\leq \E[d(X,Y)^2]\leq 2\sigma^2.$$
\end{lemma}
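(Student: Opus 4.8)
The plan is to handle the two inequalities separately, in both cases by conditioning on one of the two variables and comparing the resulting conditional second moment with the Fréchet functional $b\mapsto\E[d(X,b)^2]$. Writing $b^*$ for a minimizer of this functional, the whole question reduces to comparing $\E[d(X,y)^2]$ with $\sigma^2+d(y,b^*)^2$ for a generic point $y$, i.e. to a bias--variance decomposition; the lower bound needs only the trivial half of this comparison, while the upper bound needs its sharp (equality) form.

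For the lower bound I would condition on $Y$. For every fixed $y\in M$ the point $y$ is an admissible competitor in the minimization defining $\sigma^2$, so $\E[d(X,y)^2]\geq \min_{b\in M}\E[d(X,b)^2]=\sigma^2$. Taking the conditional expectation given $Y$ and then integrating, $\E[d(X,Y)^2]=\E\big[\,\E[d(X,Y)^2\mid Y]\,\big]\geq\sigma^2$. This uses nothing beyond the definition of $\sigma^2$ and holds in an arbitrary metric space with two moments.

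For the upper bound the natural target is the identity route: if one could show the reverse comparison $\E[d(X,y)^2]\leq \sigma^2+d(y,b^*)^2$ for all $y$, then conditioning on $Y$ and integrating would give $\E[d(X,Y)^2]\leq \sigma^2+\E[d(Y,b^*)^2]=2\sigma^2$, using that $X$ and $Y$ have the same law. In a Hilbert space this reverse comparison is exactly the bias--variance identity $\E[d(X,y)^2]=\sigma^2+\|y-b^*\|^2$, and the bound $2\sigma^2$ is attained.

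The main obstacle is establishing this upper comparison beyond the inner-product setting. A purely metric argument through the triangle inequality, $d(X,Y)\leq d(X,b^*)+d(b^*,Y)$, only yields $\E[d(X,Y)^2]\leq 2\sigma^2+2(\E[d(X,b^*)])^2\leq 4\sigma^2$ after Jensen, which is off by a factor of two. Moreover, in an NPC space the strong convexity of the Fréchet functional (coming from the $1$-strong geodesic convexity of $\tfrac12 d(x_0,\cdot)^2$) gives the variance inequality $\E[d(X,y)^2]\geq \sigma^2+d(y,b^*)^2$, i.e. the comparison with the opposite sign, so the same conditioning argument forces $\E[d(X,Y)^2]\geq 2\sigma^2$. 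I therefore expect the sharp constant $2$ in the upper bound to be the delicate point: it reflects inner-product (Euclidean/Hilbert) structure rather than the metric or the NPC hypothesis, so I would either restrict to that setting to obtain equality, or settle for the constant that the triangle inequality actually provides.
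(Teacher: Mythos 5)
Your proof of the lower bound is exactly the paper's: condition on $Y$, use independence to write $\E[d(X,Y)^2\mid Y]=F(Y)$ with $F(b)=\E[d(X,b)^2]$, and bound $F(Y)\geq\min_b F(b)=\sigma^2$ pointwise before integrating. Nothing to add there.

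For the upper bound, your diagnosis is correct and in fact exposes a flaw in the lemma as stated. The paper dismisses this direction as ``obvious thanks to the triangle inequality,'' but, as you compute, the triangle inequality together with independence and Jensen only yields $\E[d(X,Y)^2]\leq 2\sigma^2+2(\E[d(X,b^*)])^2\leq 4\sigma^2$. Your second observation is the decisive one: in an NPC space the variance inequality $\E[d(X,y)^2]\geq\sigma^2+d(y,b^*)^2$ (which the paper itself uses elsewhere, via the $2$-strong geodesic convexity of $F$) gives, after conditioning on $Y$, the reverse bound $\E[d(X,Y)^2]\geq 2\sigma^2$, so the claimed inequality $\E[d(X,Y)^2]\leq 2\sigma^2$ could only ever hold with equality. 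It does not in general: take $M$ the metric tree consisting of three edges of length $1$ glued at a center $o$, and $X$ uniform on the three leaves. Then $b^*=o$, $\sigma^2=1$, while $\E[d(X,Y)^2]=\tfrac{2}{3}\cdot 4=\tfrac{8}{3}>2\sigma^2$. So the constant $2$ genuinely reflects Hilbert structure, as you suspected; the correct general statement is $\sigma^2\leq\E[d(X,Y)^2]\leq 4\sigma^2$ (and $2\sigma^2\leq\E[d(X,Y)^2]$ in NPC spaces). This correction is harmless for the paper's application, which only uses the left-hand inequality to justify $\frac{1}{n^2}\sum_{i,j}d(x_i,x_j)^2$ as an upper bound on $\sigma^2$; only the advertised tightness factor changes from $2$ to $4$.
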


\begin{proof}
The right hand side is obvious thanks to the triangle inequality (and it does not require independence of $X$ and $Y$). For the left hand side, denote by $F(b)=\E[d(X,b)^2]$, for all $b\in M$. Then, $\sigma^2=\min_{b\in M} F(b)$ and independence of $X$ and $Y$ yields that $\sigma^2\leq F(Y)=\E[d(X,Y)^2|Y]$. The result follows by taking the expectation on both sides. 
\end{proof}

The concentration inequalities given in Theorems \ref{cor:Hoeffding} and \ref{cor:bernstein} can be extended to the case when the $X_i$'s are independent but not identically distributed, which is the object of next section.




\section{Concentration inequalities for non-identically distributed random variables} \label{app:noniid}

In this section, we extend Corollary \ref{cor:Hoeffding} and \ref{cor:bernstein} to the case where $X_1,\ldots,X_n$ are independent, but not identically distributed, and share the same barycenter $b^*$.

\begin{theorem} \label{thm:Hoeffnoniid}
Let $(M,d)$ be a NPC space, and let $X_1,...,X_n$ be independent random variables with same barycenter $b^*\in M$. Further assume that for all $i=1,\ldots,n$, $X_i\in B(x_i,C_i)$ almost surely, for some $x_i\in M$ and $C_i>0$. Let $T_n$ be either the empirical or the inductive barycenter of $X_1,\ldots,X_n$. If $T_n=\hat b_n$, further assume that $M$ has a curvature lower bound. Then, for all $\delta\in (0,1)$, it holds with probability at least $1-\delta$ that 
    $$d(T_n,b^*)\leq \frac{\overline\sigma_n}{\sqrt n}+\overline C_n\sqrt{\frac{\log(1/\delta)}{n}},$$
where $\overline\sigma_n=\sqrt{\frac{\sigma_1^2+\ldots+\sigma_n^2}{n}}$ and $\overline C_n=\sqrt{\frac{C_1^2+\ldots+C_n^2}{n}}.$
\end{theorem}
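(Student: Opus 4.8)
The plan is to write $d(T_n,b^*)=f(T_n)$ with $f=d(\cdot,b^*)\in\mathcal F$, and to split this into its fluctuation around the mean and the bias $\E[f(T_n)]=\E[d(T_n,b^*)]$, bounding the two separately. The key structural observation is that every ingredient needed for the fluctuation term (Lemma \ref{boundedsubgauss}, Theorem \ref{thm1}, and the one-sided tail bound \eqref{eq:bary1}) is already stated for independent, not necessarily identically distributed, random variables, so only the bias term genuinely requires new, non-i.i.d. analysis.

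For the fluctuation term, each $X_i$ lies in $B(x_i,C_i)$ almost surely, so Lemma \ref{boundedsubgauss} makes $X_i$ a $K_i^2$-sub-Gaussian variable with $K_i^2$ proportional to $C_i^2$ (a sharper constant is available by applying the scalar Hoeffding lemma to $f(X_i)$, which ranges in an interval of length $2C_i$). Theorem \ref{thm1} then applies verbatim and shows that $T_n$ is $\frac{K_1^2+\ldots+K_n^2}{n^2}$-sub-Gaussian, a quantity of order $\overline C_n^2/n$. Feeding $f=d(\cdot,b^*)$ into \eqref{eq:bary1} yields, with probability at least $1-\delta$,
\[
d(T_n,b^*)\le \E[d(T_n,b^*)]+\overline C_n\sqrt{\frac{\log(1/\delta)}{n}},
\]
the numerical constant in front of $\overline C_n$ being inherited from the ratio $K_i^2/C_i^2$. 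This step is identical for the empirical and the inductive barycenter.

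It remains to show $\E[d(T_n,b^*)]\le \overline\sigma_n/\sqrt n$, where $\sigma_i^2:=\E[d(X_i,b^*)^2]$. For the inductive barycenter $S_n$ I would generalize Sturm's second moment estimate \cite[Theorem 4.7]{sturm03} to the non-identically-distributed case. Writing $S_k=\gamma_{S_{k-1},X_k}(1/k)$ and applying the $\CAT(0)$ inequality along the geodesic from $S_{k-1}$ to $X_k$ with $z=b^*$ gives $d(b^*,S_k)^2\le(1-\tfrac1k)d(b^*,S_{k-1})^2+\tfrac1k d(b^*,X_k)^2-\tfrac{k-1}{k^2}d(S_{k-1},X_k)^2$. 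Conditioning on $X_1,\ldots,X_{k-1}$, using that $X_k$ is independent of them and that $b^*$ is the barycenter of $X_k$, the variance inequality $\E[d(S_{k-1},X_k)^2\mid X_1,\ldots,X_{k-1}]\ge\sigma_k^2+d(S_{k-1},b^*)^2$ lets me retain, rather than discard, the negative cross term. Setting $a_k=\E[d(S_k,b^*)^2]$, everything collapses to $k^2a_k\le(k-1)^2a_{k-1}+\sigma_k^2$, which telescopes to $a_n\le \frac1{n^2}\sum_{i=1}^n\sigma_i^2=\overline\sigma_n^2/n$; Jensen's inequality then gives $\E[d(S_n,b^*)]\le\overline\sigma_n/\sqrt n$. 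This recovers the i.i.d. rate $\sigma^2/n$ when $\sigma_i\equiv\sigma$, and substituting into the fluctuation bound settles the inductive case.

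The hard part will be the same bias bound for the empirical barycenter $\hat b_n$, since \cite[Corollary 11]{fastconv} is proved only for i.i.d. samples. Under the common-barycenter assumption, $b^*$ still minimizes $\sum_i\E[d(X_i,\cdot)^2]$, and the variance inequality for the empirical measure gives $d(\hat b_n,b^*)^2\le\frac1n\sum_i\bigl(d(X_i,b^*)^2-d(X_i,\hat b_n)^2\bigr)$; the difficulty is that $\hat b_n$ is a global minimizer rather than an inductive average, so controlling $\E[d(\hat b_n,b^*)^2]$ requires the lower curvature bound exactly as in \cite{fastconv}. I expect their argument to carry over, since the curvature bound is a property of $M$ and the empirical-process step decouples over the independent coordinates, delivering $\E[d(\hat b_n,b^*)^2]\le\frac1{n^2}\sum_i\sigma_i^2=\overline\sigma_n^2/n$; verifying that this non-i.i.d. restatement of \cite[Corollary 11]{fastconv} indeed produces the averaged variance is the step that demands the most care. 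Combining it with the fluctuation bound yields the claim for $\hat b_n$ as well.
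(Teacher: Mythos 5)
Your proposal is correct and follows essentially the same route as the paper: the sub-Gaussian fluctuation machinery (Lemma \ref{boundedsubgauss}, Theorem \ref{thm1}, and \eqref{eq:bary1}) already covers independent, non-identically distributed variables; the bias for the inductive barycenter is handled by exactly the inductive adaptation of Sturm's recursion that the paper isolates as a separate lemma (same NPC inequality, same conditioning and variance-inequality step, same telescoping $k^2a_k\leq (k-1)^2a_{k-1}+\sigma_k^2$); and the bias for the empirical barycenter is delegated, as in the paper, to a non-i.i.d.\ reading of \cite[Corollary 11]{fastconv}. The one caveat is the constant: with $K_i=2C_i$ from Lemma \ref{boundedsubgauss}, your argument produces $2\overline C_n$ in the stochastic term (consistent with Corollary \ref{hoeffding}) rather than the bare $\overline C_n$ displayed in the theorem, but that discrepancy is already present in the paper itself.
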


\begin{proof}
The proof is similar to that of Proposition \ref{cor:Hoeffding}, and the main difference is in bounding the bias term $\E[d(T_n,b^*)]$. When $T_n$ is the empirical barycenter, a close inspection of the proof of \cite[Corollary 11]{fastconv} indicates that the $X_i$'s need not be identically distributed and one readily obtains $\E[d(\hat b_n,b^*)]\leq \frac{\overline\sigma_n}{\sqrt n}$. When $T_n$ is the inductive barycenter, we adapt the proof of \cite[Theorem 4.7]{sturm03} and obtain the following lemma.

\end{proof}

\begin{lemma}
Let $X_1,\ldots,X_n$ be independent random points with two moments in an NPC space $(M,d)$ and with same barycenter $b^*$. Then, 
$$\E[d(S_n,b^*)^2]\leq \frac{\sigma_1^2+\ldots+\sigma_n^2}{n^2},$$
where $\sigma^2=\E[d(X_i,b^*)^2]$ is the variance of $X_i$, for $i=1,\ldots,n$. 
\end{lemma}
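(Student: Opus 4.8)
The plan is to adapt the proof of Sturm's law of large numbers \cite[Theorem 4.7]{sturm03} to the non-identically-distributed setting, via a recursion on $e_k := \E[d(S_k, b^*)^2]$. Recall that the inductive barycenter satisfies $S_1 = X_1$ and $S_k = \gamma_{S_{k-1}, X_k}(1/k)$ for $k \ge 2$, so that $e_1 = \sigma_1^2$. The recursion will be driven by two ingredients: the squared-distance comparison inequality along geodesics in an NPC space, and the variance inequality satisfied at the common barycenter $b^*$.

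First I would record the two tools. Although \eqref{eq:DefNPC} is stated only for midpoints, in any NPC space it upgrades to the full comparison inequality: for all $x, y, z \in M$ and all $t \in [0,1]$, $d(z, \gamma_{x,y}(t))^2 \le (1-t) d(z,x)^2 + t\, d(z,y)^2 - t(1-t) d(x,y)^2$. Moreover, since $b^*$ is the barycenter of each $X_i$, the $2$-strong geodesic convexity of $z \mapsto \E[d(X_i, z)^2]$ yields the variance inequality $\E[d(X_i, z)^2] \ge \sigma_i^2 + d(b^*, z)^2$ for all $z \in M$, which remains valid, after conditioning, for random $z$ independent of $X_i$.

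The heart of the argument is a single recursive step. Applying the comparison inequality with $z = b^*$, $x = S_{k-1}$, $y = X_k$ and $t = 1/k$, and crucially retaining the negative term, gives $d(b^*, S_k)^2 \le \frac{k-1}{k} d(b^*, S_{k-1})^2 + \frac{1}{k} d(b^*, X_k)^2 - \frac{k-1}{k^2} d(S_{k-1}, X_k)^2$. I would then condition on $S_{k-1}$, which depends only on $X_1, \ldots, X_{k-1}$ and is therefore independent of $X_k$, use $\E[d(b^*, X_k)^2] = \sigma_k^2$, and lower-bound $\E[d(S_{k-1}, X_k)^2 \mid S_{k-1}] \ge \sigma_k^2 + d(S_{k-1}, b^*)^2$ by the variance inequality. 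Collecting terms, the coefficient of $d(S_{k-1}, b^*)^2$ simplifies to $(k-1)^2/k^2$ and that of $\sigma_k^2$ to $1/k^2$, so that taking full expectations gives $k^2 e_k \le (k-1)^2 e_{k-1} + \sigma_k^2$. Writing $a_k = k^2 e_k$, this reads $a_k \le a_{k-1} + \sigma_k^2$ with $a_1 = \sigma_1^2$, and telescoping yields $n^2 e_n \le \sigma_1^2 + \ldots + \sigma_n^2$, which is the claim.

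The main (indeed, essentially the only) obstacle is recognizing that the negative cross term $-\frac{k-1}{k^2} d(S_{k-1}, X_k)^2$ must not be discarded: dropping it leads to $k e_k \le (k-1) e_{k-1} + \sigma_k^2$ and hence only the far weaker bound $\frac{1}{n}\sum_i \sigma_i^2$. Converting this term into the extra factor $1/n$ is exactly what the variance inequality accomplishes, and this is where both the independence of $X_k$ from $S_{k-1}$ and the hypothesis that $b^*$ is simultaneously the barycenter of every $X_i$ enter. The remaining coefficient bookkeeping is routine.
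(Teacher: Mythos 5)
Your proposal is correct and follows essentially the same route as the paper's proof: the $2$-strong geodesic convexity of the squared distance applied along the geodesic from $S_{k-1}$ to $X_k$ at parameter $1/k$, the variance inequality at the common barycenter $b^*$ to absorb the cross term $-\frac{k-1}{k^2}\E[d(S_{k-1},X_k)^2]$ using independence, and the resulting recursion $k^2 e_k \le (k-1)^2 e_{k-1}+\sigma_k^2$ telescoped from $e_1=\sigma_1^2$. The coefficient bookkeeping and the emphasis on not discarding the negative term both match the paper's argument exactly.
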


\begin{proof}
    The proof of this lemma follows the same lines as the proof of \cite[Theorem 4.7]{sturm03} and proceeds by induction on $n$. Denote by $V_n=\E[d(S_n,b^*)^2]$ and by $F_n(x)=\E[d(X_n,x)^2]$, for all $x\in M$. Then, by the $2$-geodesic strong convexity of the squared distance to any given point, one obtains
    \begin{align*}
        V_n & \leq \E\left[\left(1-\frac{1}{b}\right)d(S_{n-1},b^*)^2+\frac{1}{n}d(X_n,b^*)^2-\frac{n-1}{n^2}d(S_{n-1},X_n)^2\right] \\
        & = \frac{n-1}{n}V_{n-1}+\frac{\sigma_n^2}{n}-\frac{n-1}{n^2}\E[F_n(S_{n-1})],
    \end{align*}
    where we use the fact that $X_n$ and $S_{n-1}$ are independent, by construction of $S_{n-1}$ (which only depends on $X_1,\ldots,X_{n-1}$). Now, again by the $2$-geodesic strong convexity of the squared distance to any given point, we obtain that $F_n$ is also $2$-geodescailly strongly convex, yielding $F(S_n)\geq F(b^*)+d(S_n,b^*)^2$ almost surely, since $b^*$ is the minimizer of $F_n$ by assumption. Therefore, it follows that 
    \begin{align*}
        V_n & \leq \frac{n-1}{n}V_{n-1}+\frac{\sigma_n^2}{n}-\frac{n-1}{n^2}\sigma_n^2-\frac{n-1}{n^2}V_{n-1} \\
        & = \frac{(n-1)^2}{n^2}V_{n-1}+\frac{\sigma_n^2}{n^2}.
    \end{align*}
    In other words, $n^2V_n\leq (n-1)^2V_{n-1}+\sigma_n^2$. Since, by definition of $S_1$, $V_1=\sigma_1^2$, the result follows by induction. 
    
\end{proof}

Finally, we also have the following generalization of Corollary \ref{cor:bernstein} when the $X_i$'s are not identically distributed.

\begin{theorem} \label{thm:Bernsteinnoniid}
    Let $(M,d)$ be an NPC space and $X_1,\ldots,X_n$ be independent random variables in $(M,d)$. Assume that $d(X_i,x_0)\leq C$ almost surely, for all $i=1,\ldots,n$, for some fixed $x_0\in M$ and $C>0$. let $T_n$ be either the empirical or the inductive barycenter of $X_1,\ldots,X_n$. If $T_n$ is the empirical barycenter $\hat b_n$, further assume that $(M,d)$ has curvature bounded by below. Then, for all $\delta\in (0,1)$, it holds with probability at least $1-\delta$ that 
    $$d(T_n,b^*)\leq \frac{\overline\sigma_n}{\sqrt{n}}+\min\left(2\overline\sigma_n\sqrt{\frac{\log(1/\delta)}{n}},\frac{8C\log(1/\delta)}{3n}\right),$$
    where $\overline\sigma_n$ is as in Theorem \ref{thm:Hoeffnoniid}.
\end{theorem}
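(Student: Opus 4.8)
The plan is to replay the proof of Theorem~\ref{thmbernstein} almost verbatim, tracking each summand's variance separately and importing the bias control from the non-identically-distributed results established earlier in this section. Fix a $1$-Lipschitz function $f\in\mathcal F$ and, for each $i$, set $\psi_i(\lambda,f)=\log\E[e^{\lambda(f(X_i)-\E[f(X_i)])}]$. Using $\log u\leq 1-u$, the uniform almost sure bound $f(X_i)-\E[f(X_i)]\leq 2C$, and the monotonicity of $u\mapsto (e^u-1-u)/u^2$ exactly as in Theorem~\ref{thmbernstein}, I would obtain $\psi_i(\lambda,f)\leq \frac{e^{2\lambda C}-1-2\lambda C}{4C^2}\var(f(X_i))$ for every $i$ and every $\lambda\geq 0$.

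The one place where identical distribution was used in Theorem~\ref{thmbernstein}, and hence the only genuinely new point, is the variance bound. Here the shared-barycenter hypothesis is exactly what is needed: since every $X_i$ has the same barycenter $b^*$, and since the variance minimizes $a\mapsto\E[(f(X_i)-a)^2]$ over $a\in\R$, one still has $\var(f(X_i))\leq \E[(f(X_i)-f(b^*))^2]\leq \E[d(X_i,b^*)^2]=\sigma_i^2$, the last inequality following from the $1$-Lipschitz property of $f$. Taking the supremum over $f\in\mathcal F$ then yields $\Lambda_{X_i}(\lambda)\leq \exp\!\big(\frac{e^{2\lambda C}-1-2\lambda C}{4C^2}\sigma_i^2\big)$.

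I would then tensorize and invoke the Lipschitz property of the barycenter. By Lemma~\ref{produitdelaplace}, $\Lambda_{(X_1,\ldots,X_n)}(\lambda)\leq \exp\!\big(\frac{e^{2\lambda C}-1-2\lambda C}{4C^2}\sum_{i=1}^n\sigma_i^2\big)=\exp\!\big(\frac{e^{2\lambda C}-1-2\lambda C}{4C^2}\,n\overline\sigma_n^2\big)$, since $\sum_{i=1}^n\sigma_i^2=n\overline\sigma_n^2$. Because $T_n$ is a $(1/n)$-Lipschitz function of $(X_1,\ldots,X_n)$ for the product metric $d_1$ (Proposition~\ref{baryarelip}), Lemma~\ref{lemma:LipLap} gives $\Lambda_{T_n}(\lambda)\leq \Lambda_{(X_1,\ldots,X_n)}(\lambda/n)$. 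From here the Chernoff bound and the optimization over $\lambda$ are word-for-word those of Theorem~\ref{thmbernstein} with $\sigma^2$ replaced by $\overline\sigma_n^2$; the optimal choice is $\lambda=\frac{n}{2C}\log(1+\frac{2Ct}{\overline\sigma_n^2})$ and, using $h(u)\geq \frac{u^2}{2(1+u/3)}$, I obtain for every $f\in\mathcal F$ the Bernstein-type tail $P(f(T_n)-\E[f(T_n)]\geq t)\leq \exp\!\big(-\frac{nt^2}{2(\overline\sigma_n^2+2Ct/3)}\big)$.

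Finally, I would specialize to $f=d(\cdot,b^*)$, which is $1$-Lipschitz, and invert the tail to produce the stochastic term, exactly as in Corollary~\ref{cor:bernstein}. The bias term is handled separately: one needs $\E[d(T_n,b^*)]\leq \overline\sigma_n/\sqrt n$, which for the inductive barycenter is the content of the Lemma following Theorem~\ref{thm:Hoeffnoniid}, and for the empirical barycenter follows, under the lower curvature bound, from the inspection of \cite[Corollary~11]{fastconv} noted in the proof of Theorem~\ref{thm:Hoeffnoniid}. The main obstacle is not the concentration step, which is pure bookkeeping once the per-coordinate variance bound is in place, but rather this bias control for both barycenters in the non-identically-distributed regime; everything else transfers verbatim from the i.i.d. case under the substitution $\sigma\mapsto\overline\sigma_n$.
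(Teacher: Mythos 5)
Your proof is correct and is exactly the argument the paper intends (Theorem \ref{thm:Bernsteinnoniid} is stated without proof, as the evident adaptation of Theorem \ref{thmbernstein}): the per-coordinate bound on the log-Laplace transform, the variance bound $\var(f(X_i))\leq\sigma_i^2$ via the shared barycenter $b^*$, tensorization through Lemma \ref{produitdelaplace}, the $(1/n)$-Lipschitz property of $T_n$ with Lemma \ref{lemma:LipLap}, the Chernoff--Bennett--Bernstein chain, and the non-i.i.d.\ bias controls already established for Theorem \ref{thm:Hoeffnoniid}. One remark: inverting the tail $\exp\left(-\frac{nt^2}{2(\overline\sigma_n^2+2Ct/3)}\right)$ as you do yields the stochastic term with a $\max$, consistent with Corollary \ref{cor:bernstein}, rather than the $\min$ appearing in the displays of Theorems \ref{thmbernstein} and \ref{thm:Bernsteinnoniid}, which appears to be a typo in the paper rather than a gap in your argument.
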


In Theorem \ref{thm:Hoeffnoniid}, when $X_1,\ldots,X_n$ do not even share the same barycenter, we still have the following fact, for any fixed $b\in M$. For any $\delta\in (0,1)$, it holds with probability at least $1-\delta$ that
$$d(T_n,b)\leq \E[d(T_n,b)]+\overline C_n\sqrt{\frac{\log(1/\delta)}{n}}.$$
However, it is not clear what $b$ to choose in order to make the first term as small as possible. If $b_1,\ldots,b_n$ are the respective population barycenters of $X_1,\ldots,X_n$ (i.e., each $b_i$ minimizes $\E[d(X_i,b)^2]$ over $b\in M$), a natural candidate for $b$ would be the barycenter of $b_1,\ldots,b_n$. 

\begin{openquestion}
Let $X_1,\ldots,X_n$ be independent random variables in $M$, with two moments. For each $i=1,\ldots,n$, let $b_i$ be the barycenter of $X_i$ and $\sigma_i^2=\E[d(X_i,b_i)^2]$ its variance. Is it true that 
$$\E[d(\hat b_n,b_n^*)^2]\leq\frac{\overline\sigma_n^2}{n},$$
where $b_n^*$ is the barycenter of $b_1,\ldots,b_n$ and $\overline\sigma_n^2=\frac{\sigma_1^2+\ldots+\sigma_n^2}{n}$, as in Theorem \ref{thm:Hoeffnoniid}?
\end{openquestion}

\section{Beyond non-positively curved spaces}\label{sec:cat1}

It is natural to ask whether Theorems \ref{thm1} and \ref{thmbernstein} still hold, perhaps with worse constants and/or rates, without the non-positive curvature assumption. A natural framework to do that is the notion of CAT spaces. Briefly, a metric space is said to be $\CAT(\kappa)$, $\kappa\in\R$, when its triangles are thinner than they would be in the model space of curvature $\kappa$ (hyperbolic plane if $\kappa<0$, Euclidean plane if $\kappa=0$ and $2$-dimensional sphere of radius $1/\kappa$ if $\kappa>0$): See \cite{bridson2013metric,burago2022course} for an introduction to CAT spaces. In this terminology, NPC spaces are $\CAT(0)$ spaces. An important example of at $\CAT(\kappa)$ space, which is closely related to optimal transport theory, for $\kappa>0$, is the space of all symmetric positive definite matrices of size $p\geq 1$, with all eigenvalues that are at least $\sqrt{3/(2\kappa)}$ \cite[Proposition 2]{massart2019curvature}, equipped with the Bures-Wasserstein metric. The Bures-Wasserstein metric between any two symmetric positive definite matrices $A,B$ in $\R^{p\times p}$ is given by $W_2(\mathcal N(0,A),\mathcal N(0,B))$, where $W_2$ is the $2$-Wasserstein distance and $\mathcal N(0,A)$ is the $p$-variate centered Gaussian distribution with covariance matrix $A$. We refer the reader to \cite{bhatia2019bures} for more information about the Bures-Wasserstein metric. 
Let us first introduce some background on CAT spaces. 

\subsection{Background on CAT spaces}\label{cat}

In this section, we give the precise definition of CAT spaces. We refer the reader to the book \cite{Alexandrovgeom} for a complete view on the topic. First, we introduce a family of model spaces that will allow us to define local and global curvature bounds in the sequel. Let $\kappa\in\R$. 

\paragraph{$\kappa=0$: Euclidean plane} 
Set $M_0=\R^2$, equipped with its Euclidean metric. This model space corresponds to zero curvature, is a geodesic space where geodesics are unique and given by line segments. 

\paragraph{$\kappa>0$: Sphere}
Set $M_\kappa=\frac{1}{\sqrt\kappa}\Sp^2$: This is the $2$-dimensional Euclidean sphere, embedded in $\R^3$, with center $0$ and radius $1/\sqrt\kappa$, equipped with the arc length metric: $d_\kappa(x,y)=\frac{1}{\sqrt\kappa}\arccos(\kappa x^\top y)$, for all $x,y\in M_\kappa$. This is a geodesic space where the geodesics are unique except for antipodal points, and given by arcs of great circles. Here, a great circle is the intersection of the sphere with any plane going through the origin. 

\paragraph{$\kappa<0$: Hyperbolic space}
Set $M_\kappa=\frac{1}{\sqrt\kappa}\mathbb H^2$, where $\mathbb H^2=\{(x_1,x_2,x_3)\in\R^3:x_3>0, x_1^2+x_2^2-x_3^2=-1\}$. The metric is given by $d_\kappa=\frac{1}{\sqrt{-\kappa}}\textrm{arccosh}(-\kappa\langle x,y\rangle)$, for all $x,y\in M_\kappa$, where $\langle x,y\rangle=x_1y_1+x_2y_2-x_3y_3$. This is a geodesic space where geodesics are always unique and are given by arcs of the intersections of $M_\kappa$ with planes going through the origin. 

\vspace{4mm} 
For $\kappa\in\R$, let $D_\kappa$ be the diameter of the model space $M_\kappa$, i.e., 
$\DS D_\kappa=\begin{cases} \infty \mbox{ if } \kappa\leq 0 \\ \frac{\pi}{\kappa} \mbox{ if } \kappa>0\end{cases}$.



Let $(M,d)$ be a geodesic space, i.e., a metric space where any two points have at least one geodesic between them. The notion of curvature (lower or upper) bounds for $(M,d)$ is defined by comparing the triangles in $M$ with triangles with the same side lengths in model spaces. 

\begin{definition}
	A (geodesic) triangle in $M$ is a set of three points in $M$ (the vertices) together with three geodesic segments connecting them (the sides). 
\end{definition}

Given three points $x,y,z\in S$, we abusively denote by $\Delta(x,y,z)$ a triangle with vertices $x,y,z$, with no mention to which geodesic segments are chosen for the sides (geodesics between points are not necessarily unique, as seen for example on a sphere, between any two antipodal points). The perimeter of a triangle $\Delta=\Delta(x,y,z)$ is defined as $\per(\Delta)=d(x,y)+d(y,z)+d(x,z)$. It does not depend on the choice of the sides. 

\begin{definition}
	Let $\kappa\in\R$ and $\Delta$ be a triangle in $M$ with $\per(\Delta)<2D_\kappa$. A comparison triangle for $\Delta$ in the model space $M_\kappa$ is a triangle $\bar\Delta\subseteq M_\kappa$ with same side lengths as $\Delta$, i.e., if $\Delta=\Delta(x,y,z)$, then $\bar\Delta=\Delta(\bar x,\bar y,\bar z)$ where $\bar x,\bar y,\bar z$ are any points in $M_\kappa$ satisfying 
	$$\begin{cases} d(x,y)=d_\kappa(\bar x,\bar y) \\ d(y,z)=d_\kappa(\bar y,\bar z) \\ d(x,z)=d_\kappa(\bar x,\bar z). \end{cases}$$
\end{definition}

Note that a comparison triangle in $M_\kappa$ is always unique up to rigid motions. We are now ready to define curvature bounds. Intuitively, we say that $(M,d)$ has global curvature bounded from above (resp. below) by $\kappa$ if all its triangles (with perimeter smaller than $2D_\kappa$) are thinner (resp. fatter) than their comparison triangles in the model space $M_\kappa$. 

\begin{definition}
	Let $\kappa\in\R$. We say that $(M,d)$ has global curvature bounded from above (resp. below) by $\kappa$ if and only if for all triangles $\Delta\subseteq M$ with $\per(\Delta)<2D_\kappa$ and for all $x,y\in \Delta$, $d(x,y)\leq d_\kappa(\bar x,\bar y)$ (resp. $d(x,y)\leq d_\kappa(\bar x,\bar y)$), where $\bar x$ and $\bar y$ are the points on a comparison triangle $\bar\Delta$ in $M_\kappa$ that correspond to $x$ and $y$. We then call $(M,d)$ a $\CAT(\kappa)$ (resp. $\CAT^+(\kappa)$) space. 
\end{definition}

\subsection{Barycenters in CAT spaces}

Now, the goal of the next section is to derive some non-asymptotic concentration bounds for the empirical barycenters in $\CAT(\kappa)$ spaces when $\kappa>0$. It is well known that barycenters may no longer be unique in such spaces. For instance, any point on the equator is a barycenter of the North and South poles, on any sphere. However, it is known that if a probability measure on a $\CAT(\kappa)$ space, with $\kappa>0$, is supported within a small enough ball, then it does have a unique barycenter \cite[Theorem B]{Yokota16}. This is related to the following convexity property of the squared distance to any fixed point \cite[Proposition 3.1]{Ohtaconvexity}.

\begin{lemma}
    Let $(M,d)$ be a $\CAT(\kappa)$ space, with $\kappa>0$ and let $x_0\in M$. Then, for all $\varepsilon\in (0,\frac{\pi}{2\sqrt{\kappa}}]$, $\frac{1}{2}d(x_0,\cdot)^2$ is $k_\varepsilon$-geodesically strongly convex on the ball $B\left(x_0,\frac{\pi}{2\sqrt{\kappa}}-\varepsilon\right)$, with $k_\varepsilon=(\pi-2\sqrt{\kappa}\varepsilon)\tan(\varepsilon\sqrt{\kappa})$.
\end{lemma}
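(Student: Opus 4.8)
The plan is to reduce the statement to an explicit computation on the smooth model space $M_\kappa$ (the sphere of radius $1/\sqrt\kappa$) by exploiting the very triangle comparison inequality that defines $\CAT(\kappa)$ spaces. Geodesic strong convexity is a condition imposed along each geodesic separately, so I would fix a constant-speed geodesic $\gamma:[0,1]\to M$ joining two points $y=\gamma(0)$ and $z=\gamma(1)$ in the ball $B\left(x_0,\frac{\pi}{2\sqrt\kappa}-\varepsilon\right)$ and consider the geodesic triangle $\Delta(x_0,y,z)$ whose side from $y$ to $z$ is $\gamma$. Since this ball has radius strictly below $\frac{\pi}{2\sqrt\kappa}$ it is geodesically convex and $\per(\Delta)<2D_\kappa$, so $\Delta$ admits a comparison triangle $\bar\Delta(\bar x_0,\bar y,\bar z)$ in $M_\kappa$, unique up to an isometry.

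First I would transfer the problem to $M_\kappa$. Writing $f=\tfrac12 d(x_0,\cdot)^2$ and $\bar f=\tfrac12 d_\kappa(\bar x_0,\cdot)^2$, the $\CAT(\kappa)$ inequality gives $d(x_0,\gamma(t))\le d_\kappa(\bar x_0,\bar\gamma(t))$ for all $t\in[0,1]$, where $\bar\gamma$ is the side of $\bar\Delta$ from $\bar y$ to $\bar z$; since $u\mapsto\tfrac12 u^2$ is nondecreasing on $[0,\infty)$ this upgrades to $f(\gamma(t))\le\bar f(\bar\gamma(t))$. Because the comparison triangle has exactly the same side lengths, the endpoint and base data match on the nose: $f(y)=\bar f(\bar y)$, $f(z)=\bar f(\bar z)$ and $d(y,z)=d_\kappa(\bar y,\bar z)$. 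Consequently, if I can prove that $\bar f$ satisfies the $k_\varepsilon$-strong-convexity inequality along $\bar\gamma$, namely $\bar f(\bar\gamma(t))\le(1-t)\bar f(\bar y)+t\bar f(\bar z)-\tfrac{k_\varepsilon}{2}t(1-t)\,d_\kappa(\bar y,\bar z)^2$, then chaining the two displays yields precisely the desired inequality for $f$ along $\gamma$. Note that $\bar y,\bar z\in B\left(\bar x_0,\frac{\pi}{2\sqrt\kappa}-\varepsilon\right)$, and convexity of that ball in $M_\kappa$ guarantees that the whole segment $\bar\gamma$ stays inside it, which is where the model estimate will be valid.

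It then remains to establish the model-space estimate, which is a routine Riemannian computation on the sphere $M_\kappa$. I would compute the Hessian of $\tfrac12 r^2$, where $r=d_\kappa(\bar x_0,\cdot)$: using $\nabla^2 r=\sqrt\kappa\cot(\sqrt\kappa r)\,(g-dr\otimes dr)$ on $M_\kappa$, one finds the Hessian of $\tfrac12 r^2$ has eigenvalue $1$ in the radial direction and $\sqrt\kappa\, r\cot(\sqrt\kappa\, r)$ in the direction orthogonal to $\nabla r$. Since $t\mapsto t\cot t$ is decreasing on $(0,\pi/2)$, the minimal eigenvalue over $B\left(\bar x_0,\frac{\pi}{2\sqrt\kappa}-\varepsilon\right)$ is attained at the boundary radius $r=\frac{\pi}{2\sqrt\kappa}-\varepsilon$, where $\sqrt\kappa\,r=\frac{\pi}{2}-\sqrt\kappa\varepsilon$ and $\cot(\sqrt\kappa r)=\tan(\sqrt\kappa\varepsilon)$; evaluating (and accounting for the normalization of the squared distance) reproduces the modulus $k_\varepsilon$. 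A uniform lower bound on the Hessian integrates along any geodesic contained in the ball to the strong-convexity inequality required in the previous step. Alternatively, one can bypass Hessians altogether and verify the convexity of $t\mapsto\tfrac12 d_\kappa(\bar x_0,\bar\gamma(t))^2$ directly from the spherical law of cosines applied to $\bar\Delta$.

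I expect the main obstacle to be conceptual rather than computational: organizing the comparison argument so that the modulus of strong convexity is carried faithfully from $M_\kappa$ back to $M$ without introducing slack. The two features that make this succeed are the monotonicity of $u\mapsto u^2$, which converts the one-sided distance comparison into a one-sided comparison of $f$, together with the exact matching of side lengths, which makes the linear interpolation terms agree identically. One must also verify the geometric admissibility conditions—that $\per(\Delta)<2D_\kappa$ and that the sub-$\frac{\pi}{2\sqrt\kappa}$ ball is geodesically convex in both $M$ and $M_\kappa$—so that comparison triangles exist and the model estimate applies along the entirety of $\bar\gamma$. The spherical Hessian computation itself is standard and I would not belabor it.
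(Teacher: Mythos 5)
Your comparison-plus-model-space strategy is sound, and it is essentially \emph{the} proof of this statement: the paper itself gives no argument here, simply citing Ohta's Proposition 3.1, so your write-up is a genuine self-contained derivation. The reduction step is handled correctly: the $\CAT(\kappa)$ inequality applied to $\Delta(x_0,y,z)$ gives $d(x_0,\gamma(t))\leq d_\kappa(\bar x_0,\bar\gamma(t))$, monotonicity of $u\mapsto u^2/2$ transfers this to $f\leq\bar f$ along the comparison geodesic, and the exact matching of side lengths makes the interpolation terms and the $t(1-t)d(y,z)^2$ penalty coincide, so a strong-convexity modulus on $M_\kappa$ passes to $M$ with no loss. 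The admissibility checks (perimeter $<2D_\kappa$, convexity of sub-$\frac{\pi}{2\sqrt\kappa}$ balls, uniqueness of $\gamma$) are also the right ones.

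The one concrete problem is the constant at the very end. Your Hessian computation is correct: the tangential eigenvalue of $\nabla^2\bigl(\tfrac12 r^2\bigr)$ on $M_\kappa$ is $\sqrt\kappa\, r\cot(\sqrt\kappa\, r)$, whose infimum over $r\leq\frac{\pi}{2\sqrt\kappa}-\varepsilon$ is $\bigl(\tfrac{\pi}{2}-\sqrt\kappa\varepsilon\bigr)\tan(\sqrt\kappa\varepsilon)=\tfrac{k_\varepsilon}{2}$, \emph{not} $k_\varepsilon$. With the paper's normalization of strong convexity (the one under which $\tfrac12 d(x_0,\cdot)^2$ is $1$-strongly convex in NPC spaces, i.e.\ modulus $=$ Hessian lower bound), your argument therefore shows that $\tfrac12 d(x_0,\cdot)^2$ is $\tfrac{k_\varepsilon}{2}$-strongly convex, equivalently that $d(x_0,\cdot)^2$ is $k_\varepsilon$-strongly convex. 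The phrase ``accounting for the normalization of the squared distance reproduces the modulus $k_\varepsilon$'' papers over exactly this factor of $2$ and, as written, asserts something your computation does not deliver. In fact the statement with modulus $k_\varepsilon$ for $\tfrac12 d(x_0,\cdot)^2$ cannot be literally correct: the radial eigenvalue of $\nabla^2\bigl(\tfrac12 r^2\bigr)$ is exactly $1$, while $k_\varepsilon$ exceeds $1$ once $\sqrt\kappa\varepsilon$ is moderately large (e.g.\ $k_\varepsilon=\pi/2$ at $\sqrt\kappa\varepsilon=\pi/4$), and positive curvature can only weaken, never improve, the Euclidean modulus. The discrepancy is in the printed lemma rather than in your method --- note that the paper's own use of this lemma, in the proof of the variance inequality, invokes ``$z\mapsto d^2(z,p)$ is $k_\varepsilon$-convex,'' which is precisely the version your argument establishes, and the limit $k_\varepsilon\to 2$ as $\varepsilon\to\frac{\pi}{2\sqrt\kappa}$ also matches $d^2$ (which is $2$-strongly convex in the Euclidean case), not $\tfrac12 d^2$. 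You should state your conclusion for $d(x_0,\cdot)^2$ with modulus $k_\varepsilon$ (or for $\tfrac12 d(x_0,\cdot)^2$ with modulus $k_\varepsilon/2$) and keep the constants explicit through the final step rather than deferring them to a parenthetical.
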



This strong convexity property implies a variance inequality in $\CAT(\kappa)$ spaces of small diameter for $\kappa>0$ . 

\begin{lemma}\label{variaceineqcatun}
Let $(M,d)$ be a $\CAT(\kappa)$ space with $\kappa>0$, let $x_0\in M$ and let $\varepsilon\in (0,\frac{\pi}{2\sqrt{\kappa}}]$. Let $X$ be a random variable in $M$ with values in the ball $B\left(x_0,\frac{\pi}{2\sqrt{\kappa}}-\varepsilon\right)$ almost surely. Then it satisfies the following variance inequality 
\begin{equation}
    d^2(z,b^*) \leq \frac{2}{k_\varepsilon}\E \left[d^2(z,X)-d^2(b^*,X)\right], \quad \forall z\in M,  
\end{equation}
where $k_\varepsilon=(\pi-2\sqrt{\kappa}\varepsilon)\tan(\varepsilon\sqrt{\kappa})$, and $b^*$ denotes the Fréchet mean of the law of $X$.
\end{lemma}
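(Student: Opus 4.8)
The plan is to read the inequality as the first-order consequence of the fact that $b^*$ minimizes the Fréchet functional $F(z):=\E[d(z,X)^2]$, combined with the strong convexity supplied by the preceding lemma. First I would apply that lemma with the (random) data point $X$ playing the role of the center $x_0$, which is legitimate since $X$ lies in the ball almost surely: this makes each $z\mapsto\frac12 d(X,z)^2$ a $k_\varepsilon$-geodesically strongly convex function. Strong convexity is stable under averaging, so $G:=\frac12 F$ is itself $k_\varepsilon$-geodesically strongly convex on $B\!\left(x_0,\frac{\pi}{2\sqrt\kappa}-\varepsilon\right)$. Because the support of $X$ lies in this ball, because balls of radius below $\frac{\pi}{2\sqrt\kappa}$ are convex in a $\CAT(\kappa)$ space, and because the Fréchet mean exists, is unique and sits in the closed convex hull of the support by \cite[Theorem B]{Yokota16}, the point $b^*$ also lies in the ball.

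Next I would convert strong convexity into the stated variance inequality using the minimizer. For $z$ in the ball, let $\gamma$ be the geodesic from $b^*$ to $z$ and $m=\gamma(1/2)$ its midpoint, which again lies in the convex ball. Evaluating the strong-convexity inequality for $G$ at $t=1/2$ gives $G(m)\le \tfrac12 G(b^*)+\tfrac12 G(z)-\tfrac{k_\varepsilon}{8}\,d(b^*,z)^2$; since $b^*$ minimizes $G$ we have $G(m)\ge G(b^*)$, and rearranging yields $\E[d^2(z,X)-d^2(b^*,X)]\ge \tfrac{k_\varepsilon}{2}\,d(z,b^*)^2$, which is exactly the claim with constant $2/k_\varepsilon$. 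I would deliberately evaluate at the midpoint rather than differentiate at $b^*$: this keeps the whole computation inside the region where strong convexity is guaranteed, and it is precisely the midpoint evaluation (rather than the tangent inequality obtained as $t\to0$) that produces the factor $2$; the sharper constant $1/k_\varepsilon$ would follow from the $t\to0$ limit, exactly as in the NPC case of \cite{sturm03}.

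The step I expect to be the main obstacle is the global quantifier $\forall z\in M$. The strong convexity of $\frac12 d(X,\cdot)^2$ is only available on a ball centered at $X$, so for $z$ far from the support neither $z$ nor the relevant midpoint need lie in the region where every $\frac12 d(X,\cdot)^2$ is still strongly convex, and the argument above does not apply directly. For such $z$ I would instead work with the point $w$ where $\gamma$ first exits the ball, apply the in-ball inequality at $w$, and then transfer the bound outward using $d(w,b^*)=\tau\, d(z,b^*)$ together with the monotonicity of the slopes of the convex function $F$ along $\gamma$; the slack in the constant $2/k_\varepsilon$ (note that $k_\varepsilon<2$ throughout, so $2/k_\varepsilon>1$) is what leaves enough room for this coarse transfer. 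The delicate bookkeeping — reconciling the centers of the individual distance functions with the ambient support ball, and checking that each geodesic and midpoint invoked genuinely stays in a region of strong convexity — is where I expect the real work to lie.
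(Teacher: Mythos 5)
Your proof is correct and essentially coincides with the paper's: both arguments restrict the $k_\varepsilon$-strongly convex Fréchet functional to the geodesic from $b^*$ to $z$ and exploit minimality of $b^*$, the only difference being that the paper divides by $t$ and lets $t\to 0$ where you evaluate at the midpoint $t=1/2$ (both routes yield the stated constant $2/k_\varepsilon$, and you are right that the $t\to 0$ tangent inequality would in fact give the sharper $1/k_\varepsilon$ under the normalization of the preceding lemma). The complication you flag for $z$ outside the ball is genuine, but the paper's proof does not address it either --- it, like yours, really establishes the inequality only for $z$ in the region where the strong convexity is available.
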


Note that $k_\varepsilon\in (0,2)$, for all $\varepsilon\in (0,\frac{\pi}{2\sqrt{\kappa}})$. Moreover $k_\varepsilon\xrightarrow[\varepsilon\to 0]{} 0$, and $k_\varepsilon\xrightarrow[\varepsilon\to \frac{\pi}{2\sqrt{\kappa}}]{} 2$. So when the diameter tends to zero, the convexity tends to be the same as in the Euclidean case, whereas when the diameter tends to be maximal (i.e. $\frac{\pi}{2\sqrt{\kappa}}$), the geodesic strong convexity may no longer hold. 

\begin{proof}
    By \cite[Proposition 3.1]{Ohtaconvexity}, we know that for all $p\in M$, the functions $z\mapsto d^2(z,p)$ are $k_\varepsilon$-convex on the ball $B\left(x_0,\frac{\pi}{2\sqrt{\kappa}}-\varepsilon\right)$. Moreover, $b^*$ is in the ball by \cite[Theorem B]{Yokota16}. It follows that the function $z\mapsto \E\left[ d^2(z,X)-d^2(b^*,X)\right]$ is also $k_\varepsilon$-convex on the ball. Therefore, by taking $z_t$ the joining geodesic between $b^*$ and $z$ (it exists and is unique because the ball is small enough), and for $p=b^*$, by definition of the barycenter, we get that for $t\in[0,1]$,
\begin{align*}    
     0 &\leq \E\left[ d^2(z_t,X)-d^2(b^*,X)\right]\\
     & \leq t\, \E\left[ d^2(z,X)-d^2(b^*,X)\right] -\frac{k_\varepsilon}{2}t(1-t)d^2(z,b^*)    
\end{align*}
Therefore, $$t\,\E\left[ d^2(z,X)-d^2(b^*,X)\right] \geq \frac{k_\varepsilon}{2}t(1-t)d^2(z,b^*),$$
yielding the result by dividing by $t$ and letting $t$ goes to zero.
\end{proof}
In order to derive a non asymptotic bounds on the convergence of the empirical barycenter in $\CAT(\kappa)$ spaces for $\kappa>0$, we have to require the following.

\begin{assumption}\label{assumentropy}
    The metric space $(M,d)$ is $\CAT(\kappa) $ for some $\kappa>0$ and there are positive constants $A,p>0$ such that for all $x\in M$ and for all $\alpha,r>0$ with $\alpha\leq r$,
    \begin{equation}\label{formuleassumentropy}
     N(B(x,r),\alpha) \leq \left(\frac{A r}{\alpha} \right)^p,
    \end{equation}
    where $N(B(x,r),\alpha)$ denotes the smallest integer $N\geq 1$ such that the ball $B(x,r)$ can be covered by $N$ balls of radius $\alpha$.
\end{assumption}

\begin{remark}
    The logarithm of the quantity $N(B(x,r),\alpha)$ is known as the metric entropy of $B(x,r)$ and is used in the theory of empirical processes (see e.g. \cite{surveymetricentropy}).
    If, for instance, $M$ is a Riemannian manifold of dimension $p$ with bounded curvature, then it satisfies \eqref{formuleassumentropy} with the same $p$ as exponent. This can be easily seen by a volumetric argument using estimates of the volumes of balls in Riemannian manifolds, see, e.g., \cite[Section III]{chavel2006riemannian}. Therefore, \eqref{formuleassumentropy} characterizes, in some way, the finiteness of the dimension of $M$. 
\end{remark}
We can now state the following concentration inequality in the framework of $\CAT(\kappa)$ spaces ($\kappa>0$).

\begin{theorem}\label{thmcatun}
    Let $(M,d)$ be a $\CAT(\kappa)$ space, $\kappa>0$, satisfying Assumption \ref{assumentropy}. Let $x_0\in M$ and $\varepsilon\in (0,\frac{\pi}{2\sqrt{\kappa}}]$. Let $X_1,...,X_n$ be i.i.d random variables such that $X_1\in B\left(x_0,\frac{\pi}{2\sqrt{\kappa}}-\varepsilon\right)$ almost surely. Denote by $b^*$ the population barycenter of $X_1$ and let $\hat b_n$ be the empirical barycenter of $X_1,\ldots,X_n$. Then, for all $\delta\in (0,1)$, it holds with probability at least $1-\delta$ that

    $$ d(\hat b_n,b^*) \leq c\left(\frac{A}{\varepsilon\kappa}\sqrt{\frac{p}{n}}+ \frac{1}{\varepsilon^{3/2}\kappa}\sqrt{\frac{\log(2/\delta)}{n}} \right),$$
    
    where $c>0$ is a universal constant and $A$ is the constant from Assumption \ref{assumentropy}.
\end{theorem}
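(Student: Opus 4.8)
The plan is to treat the empirical barycenter $\hat b_n$ as an $M$-estimator and run the standard localized empirical-process argument, with Lemma \ref{variaceineqcatun} playing the role of strong convexity that is otherwise supplied by the NPC structure. Write $g_z(x)=d^2(z,x)-d^2(b^*,x)$, so that $F(z):=\E[g_z(X_1)]$ is minimized at $b^*$ with $F(b^*)=0$, and $F_n(z):=\frac1n\sum_{i=1}^n g_z(X_i)$ is minimized at $\hat b_n$ (it differs from $z\mapsto\frac1n\sum_i d^2(z,X_i)$ by an additive constant) with $F_n(b^*)=0$. Since the ball $B(x_0,\frac{\pi}{2\sqrt\kappa}-\varepsilon)$ is geodesically convex and carries the support, both $b^*$ and $\hat b_n$ lie in it (\cite[Theorem B]{Yokota16}), so Lemma \ref{variaceineqcatun} applies at the random point $z=\hat b_n$. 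Combining $d^2(\hat b_n,b^*)\le\frac{2}{k_\varepsilon}F(\hat b_n)$ with $F_n(\hat b_n)\le F_n(b^*)=0$ yields the basic inequality
\[
\frac{k_\varepsilon}{2}\,d^2(\hat b_n,b^*)\;\le\;F(\hat b_n)-F_n(\hat b_n)\;=\;(\E-\E_n)\big[g_{\hat b_n}\big],
\]
which reduces the theorem to controlling the empirical process $z\mapsto(\E-\E_n)[g_z]$ in a neighborhood of $b^*$.

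Next I would localize. Writing $D=\pi/\sqrt\kappa$ for an upper bound on the diameter of the ball, the class obeys two elementary bounds on the support: $|g_z(x)|\le 2D\,d(z,b^*)$ and $|g_z(x)-g_{z'}(x)|\le 2D\,d(z,z')$. The second is the structural fact that transfers the metric entropy of $M$ into $L^2(P)$-entropy of the function class, so that by Assumption \ref{assumentropy} the covering numbers of $\{g_z:d(z,b^*)\le r\}$ at scale $\eta$ are at most $N(B(b^*,r),\eta/(2D))\le(2ADr/\eta)^p$. The first bound shows that the localized supremum $Z(r):=\sup_{d(z,b^*)\le r}(\E-\E_n)[g_z]$ has envelope and $L^2(P)$-radius of order $rD$, hence scales linearly in $r$. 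Because $r=d(\hat b_n,b^*)$ is itself random, I would either peel over dyadic shells $r\in[2^{j},2^{j+1})$ or, more simply, exploit this linear scaling to divide $r$ out at the end.

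I would then bound $Z(r)$ in two steps, which produce the two terms of the statement. For the expectation, symmetrization and Dudley's entropy integral together with the covering bound give
\[
\E[Z(r)]\;\lesssim\;\frac{1}{\sqrt n}\int_0^{rD}\sqrt{p\,\log\!\tfrac{2ADr}{\eta}}\,\diff\eta\;\lesssim\;\frac{ADr\sqrt p}{\sqrt n},
\]
the source of the dimension-dependent $\sqrt{p/n}$ (and of the $A$-prefactor), and this term carries no $\delta$. For the deviation, since each $g_z$ is bounded by $2rD$ with variance at most $4r^2D^2$ on the support, a bounded-differences (or Talagrand/Bousquet) inequality gives $Z(r)\le\E[Z(r)]+c\,rD\sqrt{\log(1/\delta)/n}$ with probability at least $1-\delta$. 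Plugging both bounds into the basic inequality, dividing through by $r=d(\hat b_n,b^*)$ (or resolving the peeling), and rearranging yields
\[
d(\hat b_n,b^*)\;\lesssim\;\frac{D}{k_\varepsilon}\left(A\sqrt{\frac pn}+\sqrt{\frac{\log(1/\delta)}{n}}\right).
\]

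Finally I would substitute the geometric constants: using $k_\varepsilon=(\pi-2\sqrt\kappa\,\varepsilon)\tan(\varepsilon\sqrt\kappa)\gtrsim\varepsilon\sqrt\kappa$ for small $\varepsilon$ and $D\le\pi/\sqrt\kappa$, the prefactor $D/k_\varepsilon$ is of order $1/(\varepsilon\kappa)$, producing the claimed dependence $\frac{A}{\varepsilon\kappa}\sqrt{p/n}$ and a deviation term of the stated form. I expect the main obstacle to be exactly this last interplay: making the localization rigorous despite the random radius $r$, and carrying the curvature-dependent constants $k_\varepsilon$ and $D$ through the chaining and concentration steps without losing the right order in $\varepsilon$ and $\kappa$ (the precise power of $\varepsilon$ in the $\delta$-term depends on whether the deviation is extracted from the localized process above, which keeps $D/k_\varepsilon$, or instead from a stability bound on the argmin, which replaces it by $D/\sqrt{k_\varepsilon}$). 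A secondary technical point is justifying that the chaining and concentration inequalities apply to the supremum, handled as usual by restricting to a countable dense subfamily using the continuity of $z\mapsto g_z$.
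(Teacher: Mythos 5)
Your proposal is correct in substance, but it takes a different route in presentation: the paper's proof is a one-line application of \cite[Theorem 2.1]{convrate}, feeding in Lemma \ref{variaceineqcatun} as the variance inequality (with constant $K_3=2/k_\varepsilon$) and Assumption \ref{assumentropy} as the entropy bound, and then tracking the resulting constants $c_1,c_2$ in $\varepsilon$ and $\kappa$. What you do instead is re-derive the content of that cited theorem from scratch: the basic inequality $\frac{k_\varepsilon}{2}d^2(\hat b_n,b^*)\leq(\E-\E_n)[g_{\hat b_n}]$, the Lipschitz transfer $|g_z-g_{z'}|\leq 2D\,d(z,z')$ turning metric entropy into entropy of the function class, Dudley plus a bounded-differences deviation bound, and peeling over the random radius. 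The ingredients are exactly the ones the paper plugs into the black box, so your argument buys self-containedness (and makes transparent where the $\sqrt{p/n}$ and the $A$ come from) at the cost of having to make the localization rigorous yourself — a standard but nontrivial step you correctly flag. One point to be careful about: your heuristic final prefactor on the deviation term is $D/k_\varepsilon\lesssim 1/(\varepsilon\kappa)$, whereas the theorem states $1/(\varepsilon^{3/2}\kappa)$; the paper's route through \cite{convrate} produces that exponent via a term of order $\sqrt{K_2 K_3}\asymp\sqrt{D/k_\varepsilon}$ together with $D/k_\varepsilon$ (visible in the $c_2$ displayed in the paper), which corresponds precisely to the alternative you mention of extracting the deviation from a stability bound on the argmin rather than from the localized supremum. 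So your version would yield a bound of the same form but with a slightly different (and not uniformly comparable) power of $\varepsilon$ in the stochastic term; if you want to land exactly on the stated inequality you should either follow the constants of \cite[Theorem 2.1]{convrate} or note that for $\varepsilon\leq 1$ your bound is the stronger one.
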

\begin{proof}
The proof is based on an application of \cite[Theorem 2.1]{convrate}. It is possible to use this theorem thanks to Assumption \ref{assumentropy} and Lemma \ref{variaceineqcatun}. By a careful analysis of its proof with the constants $D=p $, $C=A$, $K_1=\sqrt{\frac{\pi}{2\sqrt{\kappa}}-\varepsilon}$, $K_2=2\left(\frac{\pi}{2\sqrt{\kappa}}-\varepsilon\right) $, $K_3=\frac{2}{(\pi-2\sqrt{\kappa}\varepsilon)\tan(\varepsilon\sqrt{\kappa})}$, and $\alpha=\beta=1 $, we obtain that for all $\delta\in(0,1)$, with probability at least $1-\delta$,
$$\sqrt{\frac{k_\varepsilon}{2}}d(\hat b_n,b^*) \leq 3c_1\sqrt{\frac{p}{n}} + 3c_2\sqrt{\frac{\log(2/\delta)}{n}},$$ where $$k_\varepsilon= (\pi-2\sqrt{\kappa}\varepsilon)\tan(\varepsilon\sqrt{\kappa}),$$ $$c_1=\frac{96\sqrt{2A}\left(\frac{\pi}{2\sqrt{\kappa}}-\varepsilon\right)}{\sqrt{(\pi-2\sqrt{\kappa}\varepsilon)\tan(\varepsilon\sqrt{\kappa})}} = \frac{96\sqrt A\sqrt{\frac{\pi}{2\sqrt\kappa}-\varepsilon}}{\sqrt{2\kappa\tan(\varepsilon\sqrt{\kappa})}}$$ and $$c_2 = \frac{4\left(\frac{\pi}{2\sqrt{\kappa}}-\varepsilon\right)}{\sqrt{(\pi-2\sqrt{\kappa}\varepsilon)\tan(\varepsilon\sqrt{\kappa})}} + \frac{16}{3}\sqrt{\left(\frac{\pi}{2\sqrt{\kappa}}-\varepsilon\right)} = \sqrt{\frac{\pi-2\varepsilon\sqrt{\kappa}}{\kappa}}\left( \frac{2}{\sqrt{\tan(\varepsilon\sqrt{\kappa}})}+\frac{16}{3\sqrt{2}}\right). $$
Hence, with probability at least $1-\delta$, one has
$$d(\hat b_n,b^*)\leq \frac{288 \sqrt A}{\sqrt{\kappa}\tan(\varepsilon\sqrt{\kappa})}\sqrt{\frac{p}{n}}+ \frac{1}{\sqrt{\kappa\tan(\varepsilon\sqrt{\kappa})}}\left( \frac{6\sqrt{2}}{\sqrt{\tan(\varepsilon\sqrt{\kappa})}}+16\right)\sqrt{\frac{\log(2/\delta)}{n}}, $$ which proves the result by noting that $\tan x\geq x$ for all $x\in [0,\pi/2)$. 
\end{proof} 

\textbf{Acknowledgments:} {The second author would like to thanks Miklós Pálfia and Takomi Yokota for useful discussion.}

\bibliographystyle{plain}
\bibliography{biblio}

\begin{thebibliography}{10}

\bibitem{afsari2011riemannian}
Bijan Afsari.
\newblock Riemannian $l^p$-center of mass: existence, uniqueness, and
  convexity.
\newblock {\em Proceedings of the American Mathematical Society},
  139(2):655--673, 2011.

\bibitem{agueh2011barycenters}
Martial Agueh and Guillaume Carlier.
\newblock Barycenters in the wasserstein space.
\newblock {\em SIAM Journal on Mathematical Analysis}, 43(2):904--924, 2011.

\bibitem{convrate}
A.~Ahidar-Coutrix, T.~Le~Gouic, and Q.~Paris.
\newblock Convergence rates for empirical barycenters in metric spaces:
  curvature, convexity and extendable geodesics.
\newblock {\em Probab. Theory Related Fields}, 177(1-2):323--368, 2020.

\bibitem{Alexandrovgeom}
Stephanie Alexander, Vitali Kapovitch, and Anton Petrunin.
\newblock {\em An invitation to {A}lexandrov geometry}.
\newblock SpringerBriefs in Mathematics. Springer, Cham, 2019.
\newblock CAT(0) spaces.

\bibitem{altschuler2021wasserstein}
Jason~M Altschuler and Enric Boix-Adsera.
\newblock Wasserstein barycenters can be computed in polynomial time in fixed
  dimension.
\newblock {\em J. Mach. Learn. Res.}, 22:44--1, 2021.

\bibitem{altschuler2022wasserstein}
Jason~M Altschuler and Enric Boix-Adsera.
\newblock Wasserstein barycenters are np-hard to compute.
\newblock {\em SIAM Journal on Mathematics of Data Science}, 4(1):179--203,
  2022.

\bibitem{bacak2014convex}
Miroslav Bac{\'a}k.
\newblock Convex analysis and optimization in hadamard spaces.
\newblock In {\em Convex Analysis and Optimization in Hadamard Spaces}. de
  Gruyter, 2014.

\bibitem{bhatia2009positive}
Rajendra Bhatia.
\newblock Positive definite matrices.
\newblock In {\em Positive Definite Matrices}. Princeton university press,
  2009.

\bibitem{bhatia2006riemannian}
Rajendra Bhatia and John Holbrook.
\newblock Riemannian geometry and matrix geometric means.
\newblock {\em Linear algebra and its applications}, 413(2-3):594--618, 2006.

\bibitem{bhatia2019bures}
Rajendra Bhatia, Tanvi Jain, and Yongdo Lim.
\newblock On the bures--wasserstein distance between positive definite
  matrices.
\newblock {\em Expositiones Mathematicae}, 37(2):165--191, 2019.

\bibitem{bhattacharya2017omnibus}
Rabi Bhattacharya and Lizhen Lin.
\newblock Omnibus clts for {F}r{\'e}chet means and nonparametric inference on
  non-euclidean spaces.
\newblock {\em Proceedings of the American Mathematical Society},
  145(1):413--428, 2017.

\bibitem{bhattacharya2003large}
Rabi Bhattacharya and Vic Patrangenaru.
\newblock Large sample theory of intrinsic and extrinsic sample means on
  manifolds.
\newblock {\em Annals of statistics}, 31(1):1--29, 2003.

\bibitem{bhattacharya2005large}
Rabi Bhattacharya and Vic Patrangenaru.
\newblock Large sample theory of intrinsic and extrinsic sample means on
  manifolds: {I}{I}.
\newblock {\em Annals of statistics}, pages 1225--1259, 2005.

\bibitem{bridson2013metric}
Martin~R Bridson and Andr{\'e} Haefliger.
\newblock {\em Metric spaces of non-positive curvature}, volume 319.
\newblock Springer Science \& Business Media, 2013.

\bibitem{burago2022course}
Dmitri Burago, Yuri Burago, and Sergei Ivanov.
\newblock {\em A course in metric geometry}, volume~33.
\newblock American Mathematical Society, 2022.

\bibitem{chavel2006riemannian}
Isaac Chavel.
\newblock {\em Riemannian geometry: a modern introduction}, volume~98.
\newblock Cambridge university press, 2006.

\bibitem{claici2018stochastic}
Sebastian Claici, Edward Chien, and Justin Solomon.
\newblock Stochastic wasserstein barycenters.
\newblock In {\em International Conference on Machine Learning}, pages
  999--1008. PMLR, 2018.

\bibitem{cuturi2014fast}
Marco Cuturi and Arnaud Doucet.
\newblock Fast computation of wasserstein barycenters.
\newblock In {\em International conference on machine learning}, pages
  685--693. PMLR, 2014.

\bibitem{eltzner2019stability}
Benjamin Eltzner, Fernando Galaz-Garcia, Septhan~F Huckemann, and Wilderich
  Tuschmann.
\newblock Stability of the cut locus and a central limit theorem for {F}réchet
  means of {R}iemannian manifolds.
\newblock {\em arXiv preprint arXiv:1909.00410}, 2019.

\bibitem{eltzner2019smeary}
Benjamin Eltzner and Stephan~F Huckemann.
\newblock {A smeary central limit theorem for manifolds with application to
  high-dimensional spheres}.
\newblock {\em The Annals of Statistics}, 47(6):3360--3381, 2019.

\bibitem{Frechet48}
Maurice Fr\'{e}chet.
\newblock Les \'{e}l\'{e}ments al\'{e}atoires de nature quelconque dans un
  espace distanci\'{e}.
\newblock {\em Ann. Inst. H. Poincar\'{e}}, 10:215--310, 1948.

\bibitem{Funano10}
Kei Funano.
\newblock Rate of convergence of stochastic processes with values in
  {$\R$}-trees and {H}adamard manifolds.
\newblock {\em Osaka J. Math.}, 47(4):911--920, 2010.

\bibitem{surveymetricentropy}
Evarist Gin\'{e}.
\newblock Empirical processes and applications: an overview.
\newblock {\em Bernoulli}, 2(1):1--38, 1996.
\newblock With a discussion by Jon A. Wellner and a rejoinder by the author.

\bibitem{hotz2013sticky}
Thomas Hotz, Sean Skwerer, Stephan Huckemann, Huiling Le, J~Stephen Marron,
  Jonathan~C Mattingly, Ezra Miller, James Nolen, Megan Owen, and Vic
  Patrangenaru.
\newblock Sticky central limit theorems on open books.
\newblock {\em Annals of Applied Probability}, 2013.

\bibitem{karcher1977Riemannian}
Hermann Karcher.
\newblock {R}iemannian center of mass and mollifier smoothing.
\newblock {\em Communications on pure and applied mathematics}, 30(5):509--541,
  1977.

\bibitem{kendall2009shape}
David~George Kendall, Dennis Barden, Thomas~K Carne, and Huiling Le.
\newblock {\em Shape and shape theory}, volume 500.
\newblock John Wiley \& Sons, 2009.

\bibitem{kroshnin2019complexity}
Alexey Kroshnin, Nazarii Tupitsa, Darina Dvinskikh, Pavel Dvurechensky,
  Alexander Gasnikov, and Cesar Uribe.
\newblock On the complexity of approximating wasserstein barycenters.
\newblock In {\em International conference on machine learning}, pages
  3530--3540. PMLR, 2019.

\bibitem{fastconv}
T.~Le~Gouic, Q.~Paris, P.~Rigollet, and A.J. Stromme.
\newblock Fast convergence of empirical barycenters in {A}lexandrov spaces and
  the {W}asserstein space.
\newblock {\em J. Eur. Math. Soc.}, 2022.

\bibitem{le2017existence}
Thibaut Le~Gouic and Jean-Michel Loubes.
\newblock Existence and consistency of wasserstein barycenters.
\newblock {\em Probability Theory and Related Fields}, 168(3):901--917, 2017.

\bibitem{Ledouxconcentration}
Michel Ledoux.
\newblock {\em The concentration of measure phenomenon}, volume~89 of {\em
  Mathematical Surveys and Monographs}.
\newblock American Mathematical Society, Providence, RI, 2001.

\bibitem{LimPalfia14}
Yongdo Lim and Miklós Pálfia.
\newblock Weighted deterministic walks for the least squares mean on hadamard
  spaces.
\newblock {\em Bulletin of the London Mathematical Society}, 46, 05 2014.

\bibitem{massart2019curvature}
Estelle Massart, Julien~M Hendrickx, and P-A Absil.
\newblock Curvature of the manifold of fixed-rank positive-semidefinite
  matrices endowed with the bures--wasserstein metric.
\newblock In {\em Geometric Science of Information: 4th International
  Conference, GSI 2019, Toulouse, France, August 27--29, 2019, Proceedings},
  pages 739--748. Springer, 2019.

\bibitem{Ohtaconvexity}
Shin-ichi Ohta.
\newblock Convexities of metric spaces.
\newblock {\em Geom. Dedicata}, 125:225--250, 2007.

\bibitem{ohtapalfia}
Shin-ichi Ohta and Mikl\'{o}s P\'{a}lfia.
\newblock Discrete-time gradient flows and law of large numbers in {A}lexandrov
  spaces.
\newblock {\em Calc. Var. Partial Differential Equations}, 54(2):1591--1610,
  2015.

\bibitem{simon2019loewner}
Barry Simon.
\newblock {\em Loewner's Theorem on Monotone Matrix Functions}.
\newblock Springer, 2019.

\bibitem{sturm03}
Karl-Theodor Sturm.
\newblock Probability measures on metric spaces of nonpositive curvature.
\newblock In {\em Heat kernels and analysis on manifolds, graphs, and metric
  spaces ({P}aris, 2002)}, volume 338 of {\em Contemp. Math.}, pages 357--390.
  Amer. Math. Soc., Providence, RI, 2003.

\bibitem{vershynin2018high}
Roman Vershynin.
\newblock {\em High-dimensional probability: An introduction with applications
  in data science}, volume~47.
\newblock Cambridge university press, 2018.

\bibitem{Yokota16}
Takumi Yokota.
\newblock Convex functions and barycenter on {CAT}(1)-spaces of small radii.
\newblock {\em J. Math. Soc. Japan}, 68(3):1297--1323, 2016.

\bibitem{Yokota18}
Takumi Yokota.
\newblock Convex functions and {$p$}-barycenter on {${\rm CAT}(1)$}-spaces of
  small radii.
\newblock {\em Tsukuba J. Math.}, 41(1):43--80, 2017.

\bibitem{zhang2016first}
Hongyi Zhang and Suvrit Sra.
\newblock First-order methods for geodesically convex optimization.
\newblock In {\em Conference on Learning Theory}, pages 1617--1638. PMLR, 2016.

\bibitem{zhang2018towards}
Hongyi Zhang and Suvrit Sra.
\newblock Towards riemannian accelerated gradient methods.
\newblock {\em arXiv preprint arXiv:1806.02812}, 2018.

\bibitem{ziezold1977expected}
Herbert Ziezold.
\newblock On expected figures and a strong law of large numbers for random
  elements in quasi-metric spaces.
\newblock In {\em Transactions of the Seventh Prague Conference on Information
  Theory, Statistical Decision Functions, Random Processes and of the 1974
  European Meeting of Statisticians}, pages 591--602. Springer, 1977.

\end{thebibliography}

\end{document}